\newtheorem{theorem}{Theorem}
\newtheorem{lemma}{Lemma}
\newtheorem{definition}{Definition}
\newtheorem{remark}{Remark}
\newtheorem{corollary}{Corollary}
\newtheorem{proposition}{Proposition}
\def\@pacs{}
\def\pacs#1{}
\def\preprint#1{}
\renewcommand\@pacs@name{}
\begin{document}

\title{{Spectral Deformation Flow and Dimension Recovery: \\ Invariant-Based Rigidity for Simply-Connected Closed Manifolds}}

\author{Anton Alexa}
\email{mail@antonalexa.com}
\thanks{ORCID: \href{https://orcid.org/0009-0007-0014-2446}{0000-0007-0014-2446}}
\affiliation{Independent Researcher, Chernivtsi, Ukraine}
\date{\today}

\begin{abstract}
We study an effective spectral deformation flow for mode amplitudes \( C_n(\tau) \), governed by a second-order self-adjoint operator \( \hat{C} \) on a compact interval. The flow is encoded in the multi-function \( C(v,\tau,n) \) and exhibits global stabilization toward a symmetric spectral attractor. To connect this dynamics with geometry, we introduce a deformation-spectrum encoding of compact Riemannian manifolds through a shifted Laplace--Beltrami spectrum. Within this framework, we analyze energy decay, entropy decay, and the bulk asymptotic spectral density of the encoded manifold spectrum, obtaining an information-theoretic and spectral route to dimension recovery. We further formulate a rigidity criterion showing that, when the deformation spectral invariants coincide with those of the round sphere, the spherical profile is the unique manifold-compatible asymptotic realization within the present framework. In dimension four, this yields a topological conclusion together with a spectral obstruction against exotic smooth structures that produce distinct invariants. The results position the spectral flow as an effective geometric model, rather than as a direct replacement for tensorial geometric flows on arbitrary manifolds.
\end{abstract}

\maketitle
\thispagestyle{empty}

\section{Introduction}

The deformation function \( C(v) = \pi(1 - v^2 / c^2) \), introduced in ~\cite{alexa2025-flow}, is a real-analytic, even function originally defined on the open interval \( v \in (-c, c) \), where \( c \) denotes the speed of light, and the function satisfies \( C(\pm c) = 0 \), \( C(0) = \pi \). To restrict the deformation dynamics to geometrically admissible configurations and ensure analytic control, we introduced a cutoff scale -- the critical velocity \( v_c := \sqrt{1 - \tfrac{1}{\pi}} \cdot c \approx 0.8257\,c \) -- defined by the threshold condition \( C(v_c) = 1 \). This selects the compact interval \( v \in [-v_c, v_c] \), on which the deformation profile satisfies \( C(v) \ge 1 \), remains strictly positive and bounded, and excludes configurations of extreme curvature. This domain enables the formulation of a globally regular scalar flow \( C(v, \tau) \), governed by a scalar parabolic variational equation, that remains globally regular and avoids the curvature singularities characteristic of tensorial Ricci flows~\cite{hamilton1982,perelman2002}, where singularity formation requires surgery and blow-up analysis. Within this framework, we constructed a deformation energy functional, established global convergence of the flow, and rigorously demonstrated that any simply-connected compact 3-manifold within the conformally homogeneous class evolves asymptotically to the standard 3-sphere under the flow.

Then we introduced~\cite{alexa2025-operator} a second-order differential operator \( \hat{C} := \pi \left( 1 + \frac{\hbar^2}{c^2} \frac{d^2}{dv^2} \right) \), acting on the Hilbert space \( L^2([-v_c, v_c]) \) with Dirichlet boundary conditions and Sobolev domain \( H^2 \cap H^1_0 \). We proved that \( \hat{C} \) is essentially self-adjoint by direct computation of von Neumann's deficiency indices, which vanish identically. This established the mathematical consistency of the operator and ensured a well-defined spectral problem on compact domains. The resulting operator encodes quantized geometric modes of deformation and provides a rigorous foundation for subsequent spectral analysis.

Building on this, the third article~\cite{alexa2025-spectrum} developed a full spectral theory of the operator \( \hat{C} \), including the proof of completeness of its eigenfunctions, a sharp asymptotic estimate \( C_n \sim \pi - \kappa n^2 \), and the derivation of the Spectral Rigidity Theorem. This theorem asserts that the limiting constant spectrum \( C_n = \pi \) corresponds uniquely (up to isometry) to the standard sphere. We also established that the eigenbasis \(\{\psi_n\}\) forms a complete orthonormal system in \( L^2([-v_c, v_c]) \), and that the geometric deformation \( C(v) \) admits a unique spectral reconstruction in terms of its expansion over the modes \( \psi_n \). In this way, the spectrum of \( \hat{C} \) encodes geometric information about the deformation profile within the fixed deformation-spectral framework.

In the present work, we investigate the asymptotic behavior of the spectral deformation flow \( C_n(\tau) \), where the amplitudes evolve under a variational evolution derived from the geometry. The corresponding deformation data are organized by the spectral multi-function \( C(v, \tau, n) := C_n(\tau) \psi_n(v) \), which resolves the deformation profile simultaneously across scale, spectral mode, and time. Our primary aim is to understand the global stabilization, entropy decay, and asymptotic density laws of this flow. The geometric conclusions are formulated more carefully than in earlier drafts: manifold data enter through an explicit deformation-spectrum encoding, and the resulting classification statements are interpreted as rigidity results within the present deformation framework rather than as a direct tensorial flow theory on arbitrary manifolds.

\noindent
In what follows, the spectral flow remains the central dynamical object of the paper, but its geometric interpretation is treated in a more carefully delimited way. The mode-space dynamics should be read as an effective deformation-spectral model rather than as a fully defined tensorial flow on arbitrary manifolds. To connect this dynamics with manifold geometry, we introduce below an explicit deformation-spectrum encoding of compact Riemannian manifolds. The resulting classification statements are therefore formulated as rigidity statements within this deformation framework, while the asymptotic density and entropy laws remain genuinely dynamical features of the flow itself.

\section{Spectral Geometry and the Multifunction \( C(v, \tau, n) \)}

In this section, we construct the spectral framework that underlies the deformation geometry defined by the function \( C(v, \tau, n) \). Building upon the self-adjoint operator introduced in \cite{alexa2025-operator}, and the spectral decomposition analyzed in \cite{alexa2025-spectrum}, we define a canonical multifunction on a Hilbert space of square-integrable functions over a compact interval. This multifunction serves as a canonical spectral representation of deformation modes within the present framework and forms the basis for the spectral flow and rigidity analysis developed in later sections.

We begin by recalling the operator \( \hat{C} \) as a second-order differential operator encoding the relativistic deformation of geometry. We then establish its discrete spectrum and the associated orthonormal eigenbasis, providing the functional foundation for defining the geometric state space. This basis enables the expansion of deformations into elementary spectral modes, and allows us to formulate a dynamic evolution directly in the spectral domain.

\subsection{Spectral Operator and Eigenbasis}

Let \( C(v) = \pi\left(1 - \frac{v^2}{c^2}\right) \) denote the relativistic deformation profile defined over the compact interval \( v \in [-v_c, v_c] \), with the critical velocity bound \( v_c = c \sqrt{1 - 1/\pi} \) introduced in \cite{alexa2025-flow}. To quantize this scalar geometry, we introduce the second-order differential operator
\begin{equation}
\hat{C} := \pi \left( 1 + \frac{\hbar^2}{c^2} \frac{d^2}{dv^2} \right),
\end{equation}
acting on the domain \( D(\hat{C}) := H^2([-v_c, v_c]) \cap H^1_0([-v_c, v_c]) \), that is, the Sobolev space of twice weakly differentiable functions vanishing at the endpoints. As shown in \cite{alexa2025-operator}, the operator \( \hat{C} \) is essentially self-adjoint on this domain, with deficiency indices \( (0,0) \), satisfying the von Neumann criterion for self-adjointness of symmetric operators on a compact interval.

The spectral theory of Sturm-Liouville operators on compact domains (see \cite{zettl2005}, \cite{reed-simon4}) ensures that \( \hat{C} \) admits a purely discrete spectrum with real, simple eigenvalues and a complete orthonormal set of eigenfunctions in \( L^2([-v_c, v_c]) \). The eigenvalue problem
\begin{equation}
\hat{C} \psi_n(v) = C_n \psi_n(v), \qquad \psi_n(\pm v_c) = 0,
\end{equation}
has solutions indexed by \( n \in \mathbb{N}_0 \), forming a countable orthonormal basis \( \{\psi_n\}_{n=0}^\infty \). These eigenfunctions satisfy
\begin{equation}
\langle \psi_n, \psi_m \rangle = \delta_{nm}, \qquad \sum_{n=0}^\infty \psi_n(v) \psi_n(v') = \delta(v - v').
\end{equation}
This holds in the distributional sense.

The corresponding eigenvalues \( C_n \in \mathbb{R} \) are strictly decreasing, with asymptotic behavior derived in \cite{alexa2025-spectrum}:
\begin{equation}
C_n = \pi - \frac{\pi^3 \hbar^2}{4 c^2 v_c^2} (n+1)^2 + o(n^2), \qquad n \to \infty.
\end{equation}
This spectrum is strictly bounded above by \( \pi \) and unbounded below, reflecting the ultraviolet spectral rigidity of the operator. The simplicity of the spectrum follows from the classical oscillation theorems for Dirichlet Sturm-Liouville problems \cite{zettl2005}, and guarantees that the eigenfunctions \( \psi_n \) are uniquely defined up to sign.

For any admissible deformation profile \( C(v) \in D(\hat{C}) \), the spectral decomposition
\begin{equation}
C(v) = \sum_{n=0}^\infty C_n \psi_n(v),
\end{equation}
converges in \( L^2([-v_c, v_c]) \), and, for smooth profiles, in the \( C^\infty \)-topology as well, due to the ellipticity of the operator and standard spectral regularity theorems (see \cite{reed-simon4}, Vol. IV).

Moreover, for every integer \( k \ge 0 \), the eigenfunctions \( \psi_n \) satisfy the uniform derivative bounds
\begin{lemma}[Derivative Bounds]
For each \( k \in \mathbb{N} \), there exists a constant \( C_k > 0 \) such that
\begin{equation}
\sup_{v \in [-v_c, v_c]} \left| \frac{d^k}{dv^k} \psi_n(v) \right| \le C_k (n+1)^{k+1}, \qquad \forall n \in \mathbb{N}_0.
\end{equation}
\end{lemma}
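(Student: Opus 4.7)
The plan is to derive the pointwise derivative bounds by combining the eigenvalue ODE satisfied by $\psi_n$ with one-dimensional Sobolev embedding, bootstrapping from $L^\infty$ control on $\psi_n$ up to all higher derivatives. Expanding $\hat{C}\psi_n = C_n\psi_n$ using the explicit form of $\hat{C}$, each eigenfunction satisfies the constant-coefficient linear ODE
$$\psi_n''(v) = -\omega_n^2\,\psi_n(v), \qquad \omega_n^2 := \frac{c^2(\pi - C_n)}{\pi\hbar^2},$$
with Dirichlet conditions at $\pm v_c$. The asymptotic $C_n = \pi - \tfrac{\pi^3\hbar^2}{4c^2 v_c^2}(n+1)^2 + o(n^2)$ from~\cite{alexa2025-spectrum}, together with the strict monotonicity of the spectrum, yields a two-sided comparison $c_1(n+1) \le \omega_n \le c_2(n+1)$ valid for all $n \in \mathbb{N}_0$.

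First I would establish the base cases $k=0$ and $k=1$. By the one-dimensional Sobolev embedding $H^1([-v_c,v_c]) \hookrightarrow L^\infty$, one has $\|\psi_n\|_{L^\infty} \le C\|\psi_n\|_{H^1}$. Integration by parts combined with the Dirichlet condition yields $\|\psi_n'\|_{L^2}^2 = -\langle \psi_n, \psi_n''\rangle = \omega_n^2\|\psi_n\|_{L^2}^2 = \omega_n^2$, so $\|\psi_n\|_{L^\infty} \le C(n+1)$. Repeating the embedding argument with $\psi_n'$ in place of $\psi_n$, and using $\|\psi_n''\|_{L^2} = \omega_n^2$, gives $\|\psi_n'\|_{L^\infty} \le C(n+1)^2$. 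This handles the two base exponents.

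For general $k$, the constant-coefficient structure of the ODE produces a closed-form iteration. Differentiating the equation repeatedly, one obtains
$$\psi_n^{(2m)} = (-\omega_n^2)^m\,\psi_n, \qquad \psi_n^{(2m+1)} = (-\omega_n^2)^m\,\psi_n',$$
so every higher derivative is reduced to $\psi_n$ or $\psi_n'$ multiplied by a power of $\omega_n^2 \asymp (n+1)^2$. Inserting the base bounds then gives $\|\psi_n^{(k)}\|_{L^\infty} \le C_k(n+1)^{k+1}$ for all $k$. The main point requiring care is uniformity of constants in $n$: the two-sided estimate $\omega_n \asymp (n+1)$ holds asymptotically by~\cite{alexa2025-spectrum}, but it must be promoted to a bound valid for \emph{all} $n \ge 0$. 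This is the only nontrivial step; it is handled by a direct check on the finitely many small-$n$ eigenvalues, using the simplicity and strict monotonicity of the Dirichlet spectrum established by the Sturm oscillation theorem. Once that is done, the rest is a routine bootstrap.
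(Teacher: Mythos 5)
Your proposal is correct, but it takes a genuinely different route from the paper. The paper's proof is a one-line direct computation from the closed-form eigenfunctions $\psi_n(v) = \sqrt{1/v_c}\,\sin\bigl(\tfrac{(n+1)\pi}{2v_c}(v+v_c)\bigr)$: each differentiation brings down a factor $\tfrac{(n+1)\pi}{2v_c}$, so the $k$-th derivative is bounded by a constant times $(n+1)^k$, which is in fact sharper than the stated $(n+1)^{k+1}$. You instead never invoke the explicit sine form: you extract the constant-coefficient ODE $\psi_n'' = -\omega_n^2\psi_n$ from the eigenvalue equation, get $\omega_n \lesssim (n+1)$ from the spectral asymptotics (with the finitely many small-$n$ cases absorbed into the constant, which is legitimate since $C_n < \pi$ strictly forces $\omega_n > 0$), establish the $k=0,1$ cases via the energy identity $\|\psi_n'\|_{L^2} = \omega_n$, $\|\psi_n''\|_{L^2} = \omega_n^2$ and the Sobolev embedding $H^1 \hookrightarrow L^\infty$ on the fixed interval, and then reduce all higher derivatives to these two via $\psi_n^{(2m)} = (-\omega_n^2)^m\psi_n$, $\psi_n^{(2m+1)} = (-\omega_n^2)^m\psi_n'$. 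The exponent bookkeeping checks out in both parities and reproduces exactly $(n+1)^{k+1}$. What your route buys is robustness and generality: it uses only self-adjointness, the Dirichlet boundary conditions, and the eigenvalue asymptotics, so it would survive for variable-coefficient Sturm--Liouville perturbations where no closed form exists; the exponent $k+1$ also arises naturally here, since the Sobolev embedding genuinely costs one power of $(n+1)$. What the paper's route buys is brevity and the sharper bound $(n+1)^k$ (its stated extra factor is slack), and note that you only actually need the upper bound $\omega_n \le c_2(n+1)$, not the two-sided comparison you set up.
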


\begin{proof}
This estimate follows from the explicit sine form of the eigenfunctions:
\begin{equation}
\psi_n(v) = \sqrt{\frac{1}{v_c}} \sin\left( \frac{(n+1)\pi}{2v_c}(v + v_c) \right),
\end{equation}
whose \( k \)-th derivative scales as \( \sim (n+1)^k \), and the uniform \( L^\infty \)-bound on the interval yields the extra factor, giving \( (n+1)^{k+1} \). A detailed derivation is provided in \cite{alexa2025-spectrum}, Lemma 4.
\end{proof}

These estimates, derived in \cite{alexa2025-spectrum}, are essential for controlling nonlinear spectral interactions in the variational flow constructed in Section III.

This spectral framework provides a canonical Hilbert basis for encoding geometric deformation. In the next subsection, we use this structure to define the multifunction \( C(v, \tau, n) \), representing evolving scalar deformation data in terms of spectral amplitudes and eigenmodes. This decomposition forms the foundation for the variational dynamics and rigidity statements that follow.

\subsection{Definition of the Multifunction}

Having established the existence of a discrete orthonormal basis \( \{ \psi_n(v) \} \) of eigenfunctions of the operator \( \hat{C} \), we now define the central object of this work -- a spectral multifunction that unifies geometric structure and spectral dynamics. For each fixed mode index \( n \in \mathbb{N} \), we define the deformation component by \( C(v, \tau, n) := C_n(\tau) \, \psi_n(v) \), where the spectral amplitude \( C_n(\tau) \) evolves under a flow parameter \( \tau \ge 0 \), while the eigenfunction \( \psi_n(v) \) remains fixed. This construction reflects a separation between the static spectral geometry encoded in the operator \( \hat{C} \), and the dynamical evolution of the geometry as encoded in the amplitudes \( C_n(\tau) \).

\begin{definition}[Spectral Multifunction]
The spectral multifunction \( C(v, \tau, n) := C_n(\tau)\psi_n(v) \) is a family of time-dependent geometric components indexed by \( n \in \mathbb{N}_0 \), where \( \{ \psi_n \} \) is the eigenbasis of the operator \( \hat{C} \). The natural finite-energy variables are the deviations \( \delta_n(\tau) := C_n(\tau) - \pi \), assumed to belong to \( \ell^2(\mathbb{N}_0) \); equivalently, the amplitudes evolve in the affine phase space \( \pi + \ell^2(\mathbb{N}_0) \).
\end{definition}

The quantity \( C(v, \tau, n) \) is not a single-valued function in the traditional sense, but a multifunction in the spectral domain -- that is, an indexed family of time-evolving geometric components, each associated with a specific eigenmode of the operator. The total geometric profile at time \( \tau \) is recovered via spectral synthesis as
\begin{equation}
C(v, \tau) = \sum_{n=0}^{\infty} C_n(\tau) \psi_n(v),
\end{equation}
with convergence in the \( L^2 \)-norm on the interval \( [-v_c, v_c] \), and under regularity assumptions, in the \( C^\infty \)-topology by standard results on spectral expansions \cite{reed-simon4, zettl2005}. Thus, the evolution of geometry is no longer expressed through local differential changes of a metric tensor, but through the variation of spectral amplitudes over a fixed, self-adjoint spectral basis.

This shift in perspective -- from coordinate-based metric deformation to infinite-dimensional spectral dynamics -- is more than notational. It reflects a guiding principle of the present model: geometry is encoded only within the deformation-spectral framework, through the invariant package and its evolution. The multifunction \( C(v, \tau, n) \) thus provides a canonical spectral encoding of scalar deformation data. The coordinate \( v \) serves as a spectral parameter related to relativistic deformation, the index \( n \) labels quantized geometric modes, and the flow parameter \( \tau \) governs the dynamical evolution of the structure. In this setting, the geometric data becomes a trajectory in the Hilbert space \( \ell^2(\mathbb{N}_0) \), where each configuration \( \{ C_n(\tau) \} \) corresponds to a smooth deformation profile.

It is essential to note that this framework does not rely on any tensorial or manifold coordinates beyond the initial spectral interval. The spectrum of \( \hat{C} \), together with its eigenfunctions, determines the admissible mode-space configurations of the effective flow. The resulting evolution is governed by a variational principle which we formulate in the next section, leading to a spectral flow that drives all modes toward the symmetric configuration \( C_n(\tau) \to \pi \), corresponding to the maximally symmetric geometry.

\subsection{Spectral Space and Geometric Interpretation}

The spectral decomposition described above defines a canonical representation of smooth deformation profiles in terms of a distinguished symmetric profile together with square-summable deviations. Because the constant sequence \( (\pi,\pi,\dots) \) is not itself square-summable, the natural phase space for the dynamics is not \( \ell^2 \) alone, but the affine Hilbert space \( \pi + \ell^2(\mathbb{N}_0) \), or equivalently the deviation space \( \delta(\tau) := \{C_n(\tau)-\pi\} \in \ell^2(\mathbb{N}_0) \). In this framework, geometric evolution is recast as a flow of deviations in \( \ell^2 \), and all finite-energy observables are interpreted as functionals on this deviation space.

The interpretation of \( \ell^2 \) as a space of finite-energy geometric states is rooted in the completeness of the spectral basis and the self-adjointness of the deformation operator \( \hat{C} \). Since the eigenfunctions \( \psi_n(v) \) span \( L^2([-v_c, v_c]) \), the full content of the deformation is encoded in the amplitudes \( C_n \), but the dynamically meaningful quantities are their deviations from the symmetric profile. In particular, the spectral energy, entropy, and higher invariants introduced later are naturally expressed as quadratic or logarithmic functionals of \( \delta_n = C_n-\pi \). This parallels the formal structure of field theory, where physically relevant states are often described by fluctuations around a distinguished vacuum.

Moreover, the convergence \( \delta_n(\tau) \to 0 \) in \( \ell^2 \) norm plays a fundamental role in the stabilization mechanism. It expresses finite-energy relaxation toward the symmetric profile \( C_n(\tau)\to\pi \) modewise. The geometric consequences are not taken to follow from this convergence alone; rather, they are formulated later as rigidity statements for manifold encodings whose deformation invariant package matches the spherical asymptotic profile.

The preceding discussion should be read in terms of finite-energy deviations: throughout the dynamical analysis, the mathematically controlled statement is that \( \delta_n(\tau)=C_n(\tau)-\pi \) decays in \( \ell^2 \), while \( C_n(\tau)\to\pi \) is understood modewise. This distinction removes the spurious identification of the limiting constant profile with an element of \( \ell^2 \), and it is this deviation flow that carries the variational and entropy structure of the model.

The geometric interpretation of the multifunction \( C(v, \tau, n) \) is therefore nonlocal and intrinsically spectral. The variable \( v \) is not a spatial coordinate on a manifold, but a spectral deformation parameter associated with a relativistic scale; it serves as the argument of the eigenfunctions \( \psi_n(v) \) of the operator \( \hat{C} \). The index \( n \) labels intrinsic geometric excitations, and the flow parameter \( \tau \) governs the dynamical evolution of the spectral amplitudes. What emerges from this structure is not a classical metric tensor, but a dynamically evolving spectral profile whose limiting behavior is later related to rigidity statements for manifold encodings.

\subsection{Geometric Encoding of Compact Manifolds}
\label{subsec:geometric-encoding}

To connect the effective mode dynamics with manifold geometry, we introduce an explicit two-level encoding. Throughout, \( (M^d,g) \) denotes a compact Riemannian manifold with Laplace--Beltrami eigenvalues
\[
0=\lambda_0(-\Delta_g)\le \lambda_1(-\Delta_g)\le \lambda_2(-\Delta_g)\le \cdots,
\]
counted with multiplicity.

\begin{definition}[Deformation Spectrum]
\label{def:deformation-spectrum}
Let \( (M^d,g) \) be a compact Riemannian manifold and let \( \varepsilon>0 \) be a fixed scale parameter. The \emph{deformation spectrum} of \( (M^d,g) \) is the sequence
\begin{equation}
C_n(M,g):=\pi-\varepsilon\,\lambda_n(-\Delta_g), \qquad n\in\mathbb{N}_0.
\label{eq:deformation-spectrum}
\end{equation}
\end{definition}

This construction does not identify the one-dimensional operator \( \hat{C} \) with the Laplace--Beltrami operator on \( M \). Rather, it supplies an explicit spectral encoding of geometric data into the deformation framework. Because \( \lambda_n(-\Delta_g)\to\infty \), the sequence \eqref{eq:deformation-spectrum} does not in general belong to the affine phase space \( \pi+\ell^2 \), and is therefore not inserted directly as a finite-energy state of the flow.

\begin{definition}[Renormalized Spectral Representative]
\label{def:renormalized-spectrum}
Let \( (M^d,g) \) and \( (M_*^d,g_*) \) be compact Riemannian manifolds of the same dimension, and let \( q>1+d/4 \). The \emph{renormalized deviation profile} is
\begin{equation}
\begin{aligned}
\Delta_n^{(q)}(M,g;M_*)
&:=\varepsilon\,(1+\lambda_n(-\Delta_{g_*}))^{-q} \\
&\quad \times \bigl(\lambda_n(-\Delta_g)-\lambda_n(-\Delta_{g_*})\bigr),
\end{aligned}
\label{eq:renormalized-deviation}
\end{equation}
and the \emph{renormalized spectral representative} of \( (M^d,g) \) relative to \( (M_*^d,g_*) \) is
\begin{equation}
\begin{aligned}
\widetilde C_n^{(q)}(M,g;M_*)
&:=\pi-\Delta_n^{(q)}(M,g;M_*).
\end{aligned}
\label{eq:renormalized-spectrum}
\end{equation}
\end{definition}

\begin{proposition}[Square-Summability of the Renormalized Deviation]
\label{prop:l2-membership}
Under the conditions of Definition~\ref{def:renormalized-spectrum}, \( \Delta^{(q)}(M,g;M_*)\in\ell^2(\mathbb{N}_0) \), so that \( \widetilde C^{(q)}(M,g;M_*)\in\pi+\ell^2 \).
\end{proposition}

\begin{proof}
By the Weyl asymptotic law, \( \lambda_n(-\Delta_g)\sim c_M\,n^{2/d} \) and \( \lambda_n(-\Delta_{g_*})\sim c_{M_*}\,n^{2/d} \) as \( n\to\infty \). Hence the numerator in \eqref{eq:renormalized-deviation} satisfies \( \lambda_n(-\Delta_g)-\lambda_n(-\Delta_{g_*})=O(n^{2/d}) \), while the weight \( (1+\lambda_n(-\Delta_{g_*}))^{-q}=O(n^{-2q/d}) \). Therefore \( \Delta_n^{(q)}=O(n^{2(1-q)/d}) \), and
\[
\sum_{n=0}^\infty \bigl|\Delta_n^{(q)}\bigr|^2 = O\!\left(\sum_{n=1}^\infty n^{4(1-q)/d}\right).
\]
The series converges if and only if \( 4(1-q)/d<-1 \), i.e.\ \( q>1+d/4 \), which holds by hypothesis.
\end{proof}

In the rigidity statements below, the reference geometry is taken to be the round sphere \( S^d \) with its standard metric, so that \( \widetilde C_n^{(q)}(S^d,g_{\mathrm{rd}};S^d)=\pi \) and the finite-energy dynamics takes place on renormalized deviations from the spherical profile. The role of the operator \( \hat{C} \) is then twofold: its eigenbasis \( \{\psi_n\} \) provides the fixed parameter space in which the multi-function evolves, while the manifold data furnish the raw spectral information from which the admissible renormalized representative \( \widetilde C^{(q)}(M,g;M_*)\in\pi+\ell^2 \) is constructed. The raw spectrum \( \{C_n(M,g)\} \) retains the Weyl and global spectral information used in the rigidity statements below. In this sense, the present paper studies a deformation-spectral model motivated by geometry, rather than a direct tensorial flow on arbitrary manifolds.

\section{Variational Spectral Flow and Global Stability}
\label{sec:variational-flow}

The spectral representation introduced in Section II recasts the geometry of deformation into a sequence of spectral amplitudes \( \{ C_n(\tau) \}_{n \in \mathbb{N}_0} \), evolving over a fixed self-adjoint eigenbasis \( \{\psi_n(v)\} \) in the Hilbert space \( L^2([-v_c, v_c]) \). This encoding allows for a natural formulation of geometry as a trajectory in \( \ell^2 \), where each configuration represents a smooth deformation profile. In this section, we define a variational principle that governs the evolution of these amplitudes and derive the corresponding flow equations. The formulation is motivated by the geometric idea that the most symmetric configuration \( C_n = \pi \ \forall n \) minimizes a global energy functional, and that deviations from this state represent spectral asymmetry or geometric anisotropy.

The flow is constructed to evolve each amplitude \( C_n(\tau) \) toward its maximal symmetric value \( \pi \), while allowing for nonlinear interactions that preserve the overall variational structure. We derive a well-defined dynamical system in \( \ell^2 \), prove global well-posedness and exponential convergence for the linear flow, and formulate a nonlinear stability framework with exponential convergence under perturbative control. This flow plays a role analogous to Ricci flow in Riemannian geometry, but operates directly in the spectral domain, with no reliance on local curvature tensors or manifold coordinates \cite{hamilton1982}.

\noindent
For mathematical consistency, the finite-energy phase space of the flow is henceforth understood in deviation variables \( \delta_n(\tau):=C_n(\tau)-\pi \). The modewise limit \( C_n(\tau)\to\pi \) is accompanied by strong convergence \( \delta(\tau)\to 0 \) in \( \ell^2 \). This removes the false requirement that the constant profile \( (\pi,\pi,\dots) \) itself belong to \( \ell^2 \).
Whenever manifold data are injected into this mode-space dynamics, they are understood through the renormalized representatives \eqref{eq:renormalized-spectrum}, not through the raw encoded spectrum \eqref{eq:deformation-spectrum}.

\subsection{Action Principle and Flow Equation}

Let \( \{C_n(\tau)\}_{n \in \mathbb{N}_0} \) denote the spectral amplitudes associated with the evolving deformation profile \( C(v,\tau) \in L^2([-v_c, v_c]) \), with orthonormal basis \( \{ \psi_n(v) \} \) and fixed operator \( \hat{C} \) as defined in Section II and detailed in \cite{alexa2025-operator}. We introduce a variational structure by postulating that the evolution of \( C_n(\tau) \) arises from the minimization of a global action functional defined on trajectories in \( \ell^2 \). The variational principle is defined via a Lagrangian functional \( \mathcal{L}(\{C_n(\tau)\}, \{\dot{C}_n(\tau)\}) \), and the corresponding Euler-Lagrange equations define the flow \cite{reed-simon1}.

We define the Lagrangian at each instant \( \tau \) by:
\begin{equation}
\mathcal{L}(\tau) := \sum_{n=0}^\infty \left[ \frac{1}{2} \dot{C}_n^2(\tau) - \frac{\alpha_n}{2} (C_n(\tau) - \pi)^2 \right],
\end{equation}
where \( \dot{C}_n := \frac{dC_n}{d\tau} \), and \( \alpha_n > 0 \) is a fixed spectral weight that controls the stiffness of each mode. The term \( (C_n - \pi)^2 \) penalizes deviations from the maximally symmetric configuration, while the kinetic term \( \dot{C}_n^2 \) captures the spectral dynamics. This Lagrangian is quadratic and defines a convex action on the finite-energy deviation space for bounded positive weights \( \{\alpha_n\} \), ensuring that high-frequency modes are properly controlled \cite{ambrosio2008}.

The corresponding action functional is given by:
\begin{equation}
\mathcal{S} := \int_0^\infty \mathcal{L}(\tau)\, d\tau,
\end{equation}
and the variational principle requires that this action be stationary under variations \( \delta C_n(\tau) \) with compact support in \( \tau \). The Euler-Lagrange equation for each mode is then:
\begin{equation}
\frac{d^2 C_n}{d \tau^2} + \alpha_n (C_n - \pi) = 0.
\label{eq:linear-flow}
\end{equation}

Equation \eqref{eq:linear-flow} describes a collection of decoupled harmonic oscillators with equilibrium position at \( \pi \). The general solution of each component is:
\begin{equation}
C_n(\tau) = \pi + A_n \cos(\sqrt{\alpha_n} \tau) + B_n \sin(\sqrt{\alpha_n} \tau),
\end{equation}
with constants \( A_n, B_n \in \mathbb{R} \) determined by initial conditions. However, in order to obtain monotonic convergence to \( C_n = \pi \), we impose a gradient flow structure by introducing a frictional term, thereby replacing the conservative dynamics \eqref{eq:linear-flow} with a dissipative flow \cite{evans-pde}.

We define the first-order gradient flow associated with the functional:
\begin{equation}
\mathcal{E}[C(\tau)] := \sum_{n=0}^\infty \frac{\alpha_n}{2}(C_n(\tau) - \pi)^2,
\label{eq:energy-functional}
\end{equation}
and prescribe the evolution equations as:
\begin{equation}
\frac{dC_n}{d\tau} = -\alpha_n (C_n - \pi).
\label{eq:gradient-flow}
\end{equation}

This system defines an autonomous linear ODE in \( \ell^2 \), and the solution is:
\begin{equation}
C_n(\tau) = \pi + (C_n(0) - \pi) e^{-\alpha_n \tau}.
\label{eq:exp-solution}
\end{equation}
It follows that \( C_n(\tau) \to \pi \) exponentially as \( \tau \to \infty \), with rate determined by \( \alpha_n \). The system is globally well-posed in deviation variables and converges to the symmetric fixed point, provided that the initial deviation sequence \( \{C_n(0)-\pi\} \in \ell^2 \) \cite{reed-simon1}. This linear gradient flow provides the foundation for analyzing more general nonlinear deformations introduced in the next subsection.

We note that the energy functional \eqref{eq:energy-functional} is strictly decreasing along solutions of \eqref{eq:gradient-flow}, as:
\begin{equation}
\frac{d\mathcal{E}}{d\tau} = \sum_{n=0}^\infty \alpha_n (C_n - \pi) \frac{dC_n}{d\tau} = - \sum_{n=0}^\infty \alpha_n^2 (C_n - \pi)^2 \le 0.
\end{equation}
This ensures that the flow is energetically dissipative and monotonic, and that no oscillatory behavior occurs. In particular, the fixed point \( C_n = \pi \) is globally attractive in \( \ell^2 \), and represents a state of maximal spectral symmetry.

This variational formulation provides a rigorous basis for interpreting geometric evolution as a spectral relaxation toward symmetry. In the next subsection, we extend this linear system by introducing entropy, cubic interactions, and nonlinear coupling, to reflect more realistic geometric dynamics while preserving global stability.

\subsection{Energy Functional and Entropy Decay}

To formalize the dissipative structure of the spectral flow and quantify its irreversibility, we introduce two complementary functionals: the spectral energy and the geometric entropy. These functionals serve to characterize the deviation from maximal symmetry and encode the information-theoretic and variational content of the evolving geometry.

The spectral energy functional, already introduced in Eq.~\eqref{eq:energy-functional}, is given by
\begin{equation}
\mathcal{E}[\tau] := \sum_{n=0}^\infty \frac{\alpha_n}{2} \left(C_n(\tau) - \pi \right)^2,
\end{equation}
where \( \alpha_n > 0 \) is a spectral stiffness coefficient that controls the rate of convergence of each mode. This functional attains its minimum value \( \mathcal{E}[\tau] = 0 \) if and only if \( C_n(\tau) = \pi \ \forall n \), which corresponds to the maximally symmetric geometric configuration. The convexity and coercivity of \( \mathcal{E} \) on \( \ell^2 \) follow from the positivity of \( \alpha_n \), and the functional defines a quadratic energy landscape in the space of spectral amplitudes.

The rate of decay of this energy along the flow is given by the time derivative:
\begin{equation}
\frac{d\mathcal{E}}{d\tau} = - \sum_{n=0}^\infty \alpha_n^2 \left( C_n(\tau) - \pi \right)^2 = -2 \sum_{n=0}^\infty \alpha_n \left( \frac{dC_n}{d\tau} \right)^2 \le 0,
\end{equation}
which confirms that the flow is strictly dissipative in the energy functional and monotonic with respect to spectral symmetry. No oscillatory or limit cycle behavior is possible in this linear regime. This structure generalizes the Lyapunov function formulation in dynamical systems theory to an infinite-dimensional setting \cite{hale1980}.

To capture the statistical structure of the deviation distribution and its convergence to order, we define the spectral entropy functional:
\begin{equation}
\begin{aligned}
\mathcal{S}[\tau] &:= - \sum_{n=0}^\infty w_n(\tau) \log w_n(\tau), \\
w_n(\tau) &:= \frac{|C_n(\tau)-\pi|^2}{\sum_k |C_k(\tau)-\pi|^2},
\end{aligned}
\end{equation}
which interprets the normalized squared amplitudes \( w_n(\tau) \in [0,1] \) as a probability distribution over modes. This definition is standard in spectral signal theory and quantum information \cite{nielsen-chuang}, and characterizes the disorder or uncertainty of the geometric state encoded in the spectrum.

As the flow evolves, each amplitude \( C_n(\tau) \) tends toward the symmetric value \( \pi \), and the normalized deviation distribution
\begin{equation}
\begin{aligned}
w_n(\tau)
&= \frac{|C_n(\tau)-\pi|^2}{\sum_k |C_k(\tau)-\pi|^2}.
\end{aligned}
\end{equation}
becomes increasingly concentrated near equilibrium. Let \( N(\tau, \varepsilon) := \#\{n : |C_n(\tau) - \pi| > \varepsilon\} \) denote the number of modes significantly deviating from the symmetric configuration. Then, as \( \tau \to \infty \), we have \( N(\tau, \varepsilon) \to 0 \), and the spectral entropy
\begin{equation}
\mathcal{S}[\tau] := -\sum_n w_n(\tau) \log w_n(\tau)
\end{equation}
satisfies \( \mathcal{S}[\tau] \sim \log N(\tau, \varepsilon) \), quantifying the effective complexity of the deformation. In the asymptotic regime, \( \mathcal{S}[\tau] \) approaches a finite limit: it vanishes if all modes converge exactly to \( \pi \), or stabilizes at a positive constant determined by the number of persistent modes with \( |C_n(\tau) - \pi| \neq 0 \). This decay reflects the geometric ordering induced by the flow and parallels thermodynamic entropy reduction in dissipative systems.

In particular, for initial deviation data \( \delta(0) \in \ell^2 \) satisfying uniform spectral decay, the entropy functional \( \mathcal{S}[\tau] \) is finite and differentiable for all \( \tau > 0 \), and satisfies:
\begin{equation}
\frac{d\mathcal{S}}{d\tau} = - \sum_{n=0}^\infty \left( \frac{2 \delta_n \dot{\delta}_n}{\|\delta\|^2} - \frac{2 \delta_n^2}{\|\delta\|^4} \sum_k \delta_k \dot{\delta}_k \right) \log w_n,
\end{equation}
where \( \|\delta\|^2 := \sum_n \delta_n^2 \). This expression is nonpositive along the linear dissipative flow and, more generally, asymptotically nonincreasing in the dissipative regime considered here, confirming entropy decay for the finite-energy deviation dynamics.

The dual decay of spectral energy and entropy formalizes the irreversible geometric relaxation and defines a spectral arrow of time. Unlike in the Ricci flow, where entropy arises from curvature functionals (e.g., Perelmans entropy \cite{perelman2002}), here entropy is directly tied to the statistical structure of spectral deformation. This provides a natural information-theoretic interpretation of geometric evolution in terms of spectral compression and stabilization.

\subsection{Cubic Interactions and Nonlinear Coupling}

While the linear gradient flow introduced in the previous subsection ensures exponential convergence to the symmetric configuration \( C_n = \pi \), it neglects the possible interactions between spectral modes that arise from nonlinear geometry. To incorporate these effects, we extend the linear model by introducing cubic interactions that preserve the variational structure while coupling different modes through fixed interaction coefficients. These terms are analogous to interaction vertices in quantum field theory, but they arise here from the nonlinear structure of geometric deformation.

We define the nonlinear evolution equations by introducing a cubic coupling term:
\begin{equation}
\begin{aligned}
\frac{dC_n}{d\tau}
&= -\alpha_n (C_n - \pi) \\
&\quad + \sum_{k,m} g_{nkm} C_k(\tau) C_m(\tau),
\end{aligned}
\label{eq:nonlinear-flow}
\end{equation}
where the coefficients \( g_{nkm} \in \mathbb{R} \) encode the strength of interaction between modes \( k \), \( m \), and \( n \). These coefficients are assumed to be symmetric in \( k \) and \( m \), and decay sufficiently fast in all indices to ensure convergence of the series in \eqref{eq:nonlinear-flow}. A natural choice is
\begin{equation}
|g_{nkm}| \le \frac{C}{(1 + n^p + k^p + m^p)},
\end{equation}
for some constants \( C > 0 \) and \( p > 2 \), ensuring absolute convergence of the triple sum for sequences in \( \ell^2 \), and preserving the Hilbert space structure.

The flow \eqref{eq:nonlinear-flow} defines an infinite-dimensional dynamical system with nonlinear coupling, and can be interpreted as the gradient flow of a perturbed energy functional
\begin{equation}
\mathcal{E}[C] = \sum_{n=0}^\infty \frac{\alpha_n}{2}(C_n - \pi)^2 - \sum_{n,k,m} \frac{1}{3} g_{nkm} C_n C_k C_m,
\end{equation}
which is a cubic perturbation of the quadratic energy introduced earlier. The evolution equation \eqref{eq:nonlinear-flow} satisfies
\begin{equation}
\frac{dC_n}{d\tau} = -\frac{\delta \mathcal{E}}{\delta C_n},
\end{equation}
where the functional derivative is understood in the weak \(\ell^2\)-topology. This guarantees that the flow decreases the perturbed energy:
\begin{equation}
\frac{d\mathcal{E}}{d\tau} = \sum_n \frac{\delta \mathcal{E}}{\delta C_n} \frac{dC_n}{d\tau} = - \sum_n \left( \frac{dC_n}{d\tau} \right)^2 \le 0,
\end{equation}
so long as the flow remains well-posed in \( \ell^2 \).

Existence and uniqueness of solutions for \eqref{eq:nonlinear-flow} with initial finite-energy deviations \( \{ C_n(0)-\pi \} \in \ell^2 \) follow from standard Picard iteration theorems in Banach spaces, using that the nonlinear term is smooth and subcritical. The decay of \( g_{nkm} \) ensures Lipschitz continuity of the right-hand side in the \( \ell^2 \)-norm. Moreover, the solution remains bounded for all time, since \( \mathcal{E}[\tau] \) is decreasing and coercive, provided that the cubic terms do not dominate the quadratic component asymptotically.

The nonlinear terms encode the backreaction between spectral modes, and play a critical role in stabilizing nontrivial geometric configurations. In particular, they introduce mode-coupling that can lead to transient growth, resonance, or slow mixing before the final decay to the symmetric fixed point. These effects are crucial for interpreting the geometry not as a rigid harmonic relaxation, but as a self-interacting evolution of spectral excitations.

Such cubic interactions arise naturally in geometric flows that are derived from nonlinear curvature functionals (see \cite{hamilton1982}, \cite{chow-knopf}), and in field theories where modes are not free but coupled through interaction vertices. The flow \eqref{eq:nonlinear-flow} captures this structure while remaining within the variational formalism. In the next subsection, we show that global stability persists despite these interactions, and establish exponential convergence of all modes under spectral constraints.

\subsection{Global Well-Posedness and Exponential Convergence}

We now establish that the nonlinear flow introduced in the previous subsection admits a unique global solution in \( \ell^2 \), and that under suitable spectral constraints, the solution converges exponentially to the maximally symmetric state \( C_n(\tau) \to \pi \) for all \( n \in \mathbb{N}_0 \). The analysis proceeds by combining the dissipativity of the variational energy functional with estimates on the cubic interaction terms. The key is to show that nonlinearities remain subdominant and do not destabilize the flow.

Let us consider the dynamical system defined by
\begin{equation}
\frac{dC_n}{d\tau} = -\alpha_n(C_n - \pi) + \sum_{k,m=0}^\infty g_{nkm} C_k C_m,
\label{eq:nonlinear-dynamics}
\end{equation}
where \( \alpha_n > 0 \), and the coefficients \( g_{nkm} \) satisfy a decay condition of the form
\begin{equation}
|g_{nkm}| \le \frac{C}{(1 + n^p + k^p + m^p)},
\label{eq:interaction-decay}
\end{equation}
for some fixed constants \( C > 0 \) and \( p > 2 \). This ensures that the interaction map \( \mathcal{G} : \ell^2 \to \ell^2 \), defined by
\begin{equation}
\mathcal{G}_n(\{C_k\}) := \sum_{k,m=0}^\infty g_{nkm} C_k C_m,
\end{equation}
is well-defined and continuous. In particular, using Holder's inequality and the embedding \( \ell^2 \subset \ell^1 \) with weight, one obtains:
\begin{equation}
\|\mathcal{G}(\{C_k\})\|_{\ell^2} \le C' \|\{C_k\}\|_{\ell^2}^2,
\end{equation}
for some constant \( C' > 0 \). This estimate guarantees local Lipschitz continuity of the right-hand side in \eqref{eq:nonlinear-dynamics}, and hence, by standard Picard iteration arguments (see \cite{reed-simon1}, Vol. I), there exists a unique local solution in deviation variables \( \delta(\tau)\in C^1([0, T], \ell^2) \) for some \( T > 0 \), given initial finite-energy data \( \delta(0)\in \ell^2 \).

To extend the solution globally in \( \tau \), we observe that the energy functional
\begin{equation}
\mathcal{E}[\tau] = \sum_{n=0}^\infty \frac{\alpha_n}{2}(C_n(\tau) - \pi)^2 - \sum_{n,k,m=0}^\infty \frac{1}{3} g_{nkm} C_n C_k C_m
\label{eq:total-energy}
\end{equation}
is coercive for small data in \( \ell^2 \), provided the cubic term is controlled by the quadratic part. In particular, using Young's inequality and decay of \( g_{nkm} \), we estimate:
\begin{equation}
\left| \sum_{n,k,m} g_{nkm} C_n C_k C_m \right| \le \varepsilon \sum_n \alpha_n (C_n - \pi)^2 + C_\varepsilon \left( \sum_n |C_n|^2 \right)^2,
\end{equation}
for any \( \varepsilon > 0 \), with suitable constant \( C_\varepsilon > 0 \). This implies that the energy \( \mathcal{E}[\tau] \) remains bounded from below, and hence prevents blow-up.

Since the flow is gradient, the energy is strictly decreasing:
\begin{equation}
\frac{d\mathcal{E}}{d\tau} = -\sum_n \left( \frac{dC_n}{d\tau} \right)^2 \le 0,
\end{equation}
and the trajectory \( \{C_n(\tau)\} \) remains bounded in \( \ell^2 \) for all \( \tau \ge 0 \). This ensures that the local solution can be extended to a unique global solution in \( \ell^2 \).

We now turn to the asymptotic behavior. Define the deviation \( \delta_n(\tau) := C_n(\tau) - \pi \), and rewrite \eqref{eq:nonlinear-dynamics} as
\begin{equation}
\frac{d \delta_n}{d\tau} = -\alpha_n \delta_n + \sum_{k,m} g_{nkm} (\delta_k + \pi)(\delta_m + \pi).
\end{equation}
This expression reveals that even at large \( \tau \), the nonlinear terms decay quadratically in \( \delta_k \), provided \( \delta_k \to 0 \). A bootstrap argument shows that, if initially \( \|\delta(\tau)\|_{\ell^2} \le \varepsilon \) for some small \( \varepsilon > 0 \), then the nonlinear terms remain subdominant and do not destabilize the exponential decay. More precisely, the Gronwall inequality gives:
\begin{equation}
\|\delta(\tau)\|_{\ell^2} \le \|\delta(0)\|_{\ell^2} e^{-\gamma \tau}, \quad \text{for some } \gamma > 0.
\end{equation}

Therefore, the system admits exponential convergence:
\begin{equation}
C_n(\tau) \longrightarrow \pi \qquad \text{as } \tau \to \infty,
\end{equation}
uniformly in \( \ell^2 \), for all initial data in a small ball around the fixed point. By a compactness argument and monotonicity of energy, this convergence extends to all finite-energy data. The point \( C_n = \pi \) for all \( n \) is thus a globally attractive fixed point of the full nonlinear flow.

This completes the proof of global well-posedness and spectral stabilization. In the next subsection, we formulate the canonical quantization of the flow and interpret the resulting evolution in terms of a scalar field theory over mode space.

\begin{theorem}[Global Exponential Convergence of the Spectral Flow]
\label{thm:global-convergence}
Let \( \delta_n(\tau):=C_n(\tau)-\pi \) satisfy the deviation form of the gradient flow
\begin{equation}
\frac{d\delta_n}{d\tau} = -\alpha_n \delta_n,
\end{equation}
with initial deviation data \( \delta(0)\in \ell^2 \), and weights \( \{\alpha_n\} \subset \mathbb{R}_+ \) such that \( \inf_n \alpha_n > 0 \). Then the system admits a unique global solution \( \delta(\tau)\in C^\infty([0,\infty), \ell^2) \), and the corresponding amplitudes \( C_n(\tau)=\pi+\delta_n(\tau) \) converge modewise to the symmetric fixed point:
\begin{equation}
\lim_{\tau \to \infty} C_n(\tau) = \pi \quad \text{for all } n \in \mathbb{N}_0.
\end{equation}
Moreover, the deviations converge exponentially in the strong norm of \( \ell^2 \), and the energy functional
\begin{equation}
\mathcal{E}(\tau) := \sum_{n=0}^\infty \frac{\alpha_n}{2} \delta_n(\tau)^2
\end{equation}
is strictly decreasing and satisfies
\begin{equation}
\mathcal{E}(\tau) \le \mathcal{E}(0) \, e^{-2 \delta \tau}, \qquad \delta := \inf_n \alpha_n > 0.
\end{equation}
\end{theorem}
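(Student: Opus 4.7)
The plan is to exploit the fact that the flow equation is linear and completely decoupled across modes, so that essentially all assertions reduce to scalar ODE facts combined with a uniform lower bound on the decay rates. First I would observe that for each fixed $n$ the equation $\dot{C}_n = -\alpha_n(C_n - \pi)$ is a scalar linear ODE with explicit solution $C_n(\tau) = \pi + (C_n(0) - \pi)\,e^{-\alpha_n \tau}$, as already noted in equation \eqref{eq:exp-solution}. Uniqueness mode-by-mode is immediate from Picard--Lindelöf, and this gives a candidate trajectory $\tau \mapsto \{C_n(\tau)\}$.

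Next I would promote this pointwise-in-$n$ construction to a genuine $C^\infty([0,\infty),\ell^2)$ solution. Setting $\delta_n(\tau) := C_n(\tau) - \pi$, one has the uniform bound $|\delta_n(\tau)|^2 = |\delta_n(0)|^2 e^{-2\alpha_n \tau} \le |\delta_n(0)|^2 e^{-2\delta\tau}$ with $\delta := \inf_n \alpha_n > 0$. Summing over $n$ gives $\sum_n |\delta_n(\tau)|^2 \le e^{-2\delta\tau}\sum_n |\delta_n(0)|^2 < \infty$, so the orbit remains in the affine $\ell^2$-space $\pi + \ell^2$. Smoothness in $\tau$ follows by differentiating the explicit solution term by term; the dominated convergence theorem applied to the geometric majorant $\alpha_n^k |\delta_n(0)| e^{-\delta\tau/2}$ (absorbing the polynomial factor $\alpha_n^k$ into the exponential away from $\tau=0$ and using boundedness of $\{\alpha_n\}$ on any initial compact interval if needed, or simply noting that the series converges uniformly on $[\tau_0,\infty)$ for any $\tau_0 > 0$) lets me move derivatives inside the sum. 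Uniqueness in $\ell^2$ is then an abstract corollary: any two solutions differ coordinate-wise by a solution of the same scalar ODE with zero data.

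For the energy inequality, the key is direct substitution. Since $(C_n(\tau) - \pi)^2 = (C_n(0) - \pi)^2 e^{-2\alpha_n \tau}$ and $\alpha_n \ge \delta$ for all $n$, I write
\begin{equation}
\mathcal{E}(\tau) = \sum_{n=0}^\infty \frac{\alpha_n}{2}(C_n(0)-\pi)^2 e^{-2\alpha_n \tau} \le e^{-2\delta \tau} \sum_{n=0}^\infty \frac{\alpha_n}{2}(C_n(0)-\pi)^2 = \mathcal{E}(0)\,e^{-2\delta\tau},
\end{equation}
which yields the claimed decay. Strict monotonicity of $\mathcal{E}$ follows from the already-derived identity $\dot{\mathcal{E}} = -\sum_n \alpha_n^2 (C_n-\pi)^2 \le 0$, with equality only at the fixed point.

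The only genuinely delicate point is ensuring that the formal termwise manipulations are justified in $\ell^2$ and not merely pointwise in $n$; this is where the hypothesis $\inf_n \alpha_n > 0$ does real work, since it provides a uniform exponential majorant that legitimizes interchanging limits, derivatives, and summation. Without this spectral gap, the flow would still be well-defined mode-by-mode but could fail to converge uniformly in $\ell^2$, and the global rate $e^{-2\delta\tau}$ in the energy estimate would degrade. Everything else is bookkeeping around the explicit scalar solution, so no compactness, fixed-point, or nonlinear stability argument is required at this stage; the nonlinear extensions are deferred to the cubic setting of the previous subsection.
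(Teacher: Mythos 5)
Your proposal is correct and follows essentially the same route as the paper: solve each decoupled scalar ODE explicitly, use $\alpha_n \ge \delta = \inf_n \alpha_n > 0$ to get the uniform majorant $|C_n(\tau)-\pi| \le |C_n(0)-\pi|e^{-\delta\tau}$ and hence exponential $\ell^2$ decay, with smoothness and uniqueness coming from linearity. The only cosmetic difference is that you obtain the energy bound $\mathcal{E}(\tau) \le \mathcal{E}(0)e^{-2\delta\tau}$ by substituting the explicit solution, while the paper derives $\dot{\mathcal{E}} \le -2\delta\mathcal{E}$ and applies Gr\"onwall; both are valid, and your explicit remark that the orbit lives in the affine space $\pi + \ell^2$ is in fact a slightly more careful reading of the hypothesis than the paper's own wording.
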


\begin{proof}
Each component \( \delta_n(\tau) \) solves the decoupled linear ODE
\begin{equation}
\frac{d\delta_n}{d\tau} = -\alpha_n \delta_n,
\end{equation}
with unique solution
\begin{equation}
\delta_n(\tau) = \delta_n(0)e^{-\alpha_n \tau}.
\end{equation}
Since \( \delta(0) \in \ell^2 \) and \( \alpha_n > \delta > 0 \), it follows that
\begin{equation}
|\delta_n(\tau)| \le |\delta_n(0)| e^{-\delta \tau},
\end{equation}
and hence
\begin{equation}
\sum_{n=0}^\infty |\delta_n(\tau)|^2 \le e^{-2 \delta \tau} \sum_{n=0}^\infty |\delta_n(0)|^2,
\end{equation}
i.e., \( \|\delta(\tau)\|_{\ell^2} \to 0 \) exponentially. Since \( C_n(\tau)=\pi+\delta_n(\tau) \), this proves modewise convergence \( C_n(\tau)\to\pi \).

Differentiating the energy functional gives:
\begin{equation}
\frac{d\mathcal{E}}{d\tau} = \sum_n \alpha_n \delta_n \frac{d\delta_n}{d\tau} = -\sum_n \alpha_n^2 \delta_n^2 \le -2\delta \mathcal{E},
\end{equation}
which yields the exponential bound \( \mathcal{E}(\tau) \le \mathcal{E}(0) e^{-2\delta \tau} \) by Gronwall's inequality. Uniqueness, smoothness, and global existence follow from linearity and boundedness of the solution in \( \ell^2 \).
\end{proof}

\subsection{Global Convergence under Nonlinear Interactions} 

We now generalize the global stability result to the fully nonlinear spectral flow defined in Eq.~\eqref{eq:nonlinear-dynamics}. The following theorem establishes the existence, uniqueness, and exponential convergence of solutions, even in the presence of mode interactions.

\begin{theorem}[Global Convergence of the Nonlinear Spectral Flow]
\label{thm:nonlinear-global-convergence}
Let \( \delta_n(\tau):=C_n(\tau)-\pi \) denote the finite-energy deviations, and let the amplitudes evolve under the nonlinear flow
\begin{equation}
\begin{aligned}
\frac{dC_n}{d\tau}
&= -\alpha_n (C_n - \pi) \\
&\quad + \sum_{k,m} g_{nkm} C_k C_m,
\end{aligned}
\end{equation}
with coefficients satisfying \( \alpha_n \geq \delta > 0 \) and
\begin{equation}
|g_{nkm}| \leq \frac{C}{(1 + n^p + k^p + m^p)}, \quad p > 2.
\end{equation}
Assume the initial deviation data satisfies the weighted regularity condition
\begin{equation}
\sum_{n=0}^\infty (1 + n)^{2m} |\delta_n(0)|^2 < \infty, \quad m > \tfrac{3}{2}.
\end{equation}
Then there exists a unique global deviation solution
\begin{equation}
\delta(\tau) \in C([0,\infty), \ell^2) \cap L^\infty([0,\infty), \ell^2_{2m}),
\end{equation}
and constants \( C, \gamma > 0 \) such that
\begin{equation}
\| \delta(\tau) \|_{\ell^2} \leq C e^{-\gamma \tau}, \quad \text{as } \tau \to \infty.
\end{equation}
\end{theorem}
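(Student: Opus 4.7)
The plan is to combine Picard iteration in a weighted sequence space with a Lyapunov / bootstrap argument for global extension and exponential decay. First I would change variables by setting $\delta_n(\tau):=C_n(\tau)-\pi$, turning the flow into
\begin{equation}
\frac{d\delta_n}{d\tau}\;=\;-\alpha_n\,\delta_n\;+\;\mathcal{L}_n(\delta)\;+\;\mathcal{Q}_n(\delta)\;+\;s_n,
\end{equation}
where $\mathcal{L}_n(\delta)=2\pi\sum_{k,m}g_{nkm}\delta_m$, $\mathcal{Q}_n(\delta)=\sum_{k,m}g_{nkm}\delta_k\delta_m$, and $s_n:=\pi^2\sum_{k,m}g_{nkm}$. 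For the point $C_n\equiv\pi$ to be an equilibrium (as asserted in the conclusion), I would first have to impose the compatibility condition $s_n=0$ for all $n$, which I treat as an implicit hypothesis; the linear correction $\mathcal{L}_n$ is then absorbed into a modified dissipative operator whose spectrum, thanks to the decay $p>2$, is a compact perturbation of $\{\alpha_n\}$ and remains bounded below by some $\delta_\star>0$.

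For local well-posedness I would work in the weighted Hilbert space $\ell^2_{2m}$ with norm $\|a\|_{2m}^2:=\sum_n(1+n)^{2m}|a_n|^2$. The central analytic ingredient is a Schur-type bilinear bound on $\mathcal{G}_n(a,b):=\sum_{k,m}g_{nkm}a_k b_m$: combining the decay $|g_{nkm}|\lesssim(1+n^p+k^p+m^p)^{-1}$ with Cauchy--Schwarz, the hypotheses $p>2$ and $m>3/2$ should yield the tame estimate
\begin{equation}
\|\mathcal{G}(a,b)\|_{2m}\;\lesssim\;\|a\|_{2m}\,\|b\|_{2m},
\end{equation}
so that $\ell^2_{2m}$ is a Banach algebra for this bilinear product. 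A standard contraction mapping in $C([0,T],\ell^2_{2m})$ then produces a unique local $C^1$ solution whose lifespan depends only on $\|\delta(0)\|_{2m}$.

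Global extension and exponential decay would follow from a Lyapunov argument built on $\mathcal{E}(\tau):=\tfrac12\sum_n\alpha_n\delta_n^2$. Differentiating along the flow and applying the bilinear bound together with the embedding $\ell^2_{2m}\hookrightarrow\ell^2$ should yield an estimate of the form
\begin{equation}
\frac{d\mathcal{E}}{d\tau}\;\le\;-2\delta_\star\mathcal{E}(\tau)\;+\;K\,\|\delta(\tau)\|_{2m}\,\mathcal{E}(\tau).
\end{equation}
A continuity / bootstrap argument then shows that, provided $\|\delta(0)\|_{2m}$ is sufficiently small, the weighted norm remains bounded uniformly in $\tau$, the dissipative term dominates, and Grönwall's inequality gives $\|\delta(\tau)\|_{\ell^2}\le Ce^{-\gamma\tau}$ with $\gamma:=\delta_\star-K\sup_{\tau\ge 0}\|\delta(\tau)\|_{2m}>0$. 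To remove the smallness assumption and cover all data meeting the weighted hypothesis of the theorem, I would combine monotone decrease of the perturbed energy \eqref{eq:total-energy} with LaSalle's invariance principle: the monotonicity yields a uniform $\ell^2$ bound, which drives $\|\delta(\tau)\|_{\ell^2}$ into the small-data regime in finite time, after which the exponential estimate applies.

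The principal obstacle I foresee is the simultaneous control of two incompatible norms: the linear dissipation is tied to the sequence $\alpha_n$ and controls the unweighted $\ell^2$ norm, whereas the nonlinear tame estimate lives in the weighted space $\ell^2_{2m}$ with $m>3/2$. Propagating the weighted norm globally reduces to a careful Schur bound on the trilinear form $\sum_n(1+n)^{2m}\alpha_n\,\delta_n\,\mathcal{G}_n(\delta,\delta)$, which forces a compatibility between the interaction exponent $p$, the regularity exponent $m$, and the asymptotic growth of $\alpha_n$ inherited from the Weyl law for $\hat{C}$. Verifying that Schur bound cleanly, and checking that the compatibility condition on $\sum_{k,m}g_{nkm}$ either follows from the variational construction of $\mathcal{E}$ or can be enforced without destroying the gradient structure, is the step I expect to demand the most care.
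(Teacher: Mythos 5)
Your proposal follows essentially the same skeleton as the paper's proof: substitute \( \delta_n := C_n - \pi \), propagate the weighted norm \( \|\delta\|_{\ell^2_{2m}} \), derive a dissipation inequality of the form \( \tfrac{d}{d\tau}\|\delta\|_{\ell^2}^2 \le -2\delta\|\delta\|_{\ell^2}^2 + (\text{nonlinear corrections}) \), close it by Gr\"onwall for small data, and appeal to monotonicity of the perturbed energy to drive arbitrary data into the small regime. The genuine differences are instructive. First, you isolate the constant source \( s_n = \pi^2\sum_{k,m} g_{nkm} \) and the linear cross-term \( 2\pi\sum_{k,m} g_{nkm}\delta_k \) and correctly observe that \( C_n \equiv \pi \) is not even an equilibrium unless \( s_n = 0 \); the paper's proof silently absorbs these terms into bounds such as \( \bigl|\sum_n \delta_n \sum_{k,m} g_{nkm} C_k C_m\bigr| \le C\|\delta\|_{\ell^2}^2(\|\delta\|_{\ell^2}+1) \), which the \( \pi^2 \)-term does not satisfy (it contributes only \( O(\|\delta\|_{\ell^2}) \)), so your explicit compatibility hypothesis is a real repair rather than a redundancy, and it does not obviously follow from the variational origin \( g_{nkm} = \lambda\mathcal{T}_{nkm} \). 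Second, you run local well-posedness by contraction directly in \( \ell^2_{2m} \) via a bilinear (Banach-algebra) estimate, whereas the paper does Picard in \( \ell^2 \) and propagates the weighted norm separately; either works, yours packages the two norms more cleanly but makes the Schur-type trilinear bound you flag genuinely load-bearing. Two caveats apply to both arguments equally: the claim that the modified linear part stays coercive (your \( \delta_\star > 0 \); the paper's "nonlinearities remain subdominant") requires a smallness condition on the coupling constant, since a compact perturbation of \( \{\alpha_n\} \) can shift the bottom of the spectrum below zero; and the large-data step is the weak point in both treatments, because the cubically perturbed energy is not globally coercive, so monotone decrease of \( \mathcal{E} \) (or LaSalle) only confines the trajectory to the set of critical points and does not by itself show the flow enters the small-\( \|\delta\| \) ball around \( \delta = 0 \) rather than approaching another critical configuration. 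Your proposal is at the same level of rigor as the paper on these points and is more honest about where the gaps lie.
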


\begin{proof}
Define \( \delta_n(\tau) := C_n(\tau) - \pi \), then
\begin{equation}
\frac{d\delta_n}{d\tau} = -\alpha_n \delta_n + \sum_{k,m} g_{nkm} (\delta_k + \pi)(\delta_m + \pi),
\end{equation}
which expands as
\begin{equation}
\frac{d\delta_n}{d\tau} = -\alpha_n \delta_n + \sum_{k,m} g_{nkm} \left( \delta_k \delta_m + 2\pi \delta_k + \pi^2 \right).
\end{equation}

Define the energy functional
\begin{equation}
\mathcal{E}[\tau] := \sum_n \frac{\alpha_n}{2} \delta_n^2 - \sum_{n,k,m} \frac{1}{3} g_{nkm} (\delta_n + \pi)(\delta_k + \pi)(\delta_m + \pi).
\end{equation}
The decay of \( g_{nkm} \) implies the nonlinear term satisfies the estimate
\begin{equation}
\left| \sum_{n,k,m} g_{nkm} C_n C_k C_m \right| \leq \varepsilon \sum_n \alpha_n \delta_n^2 + C_\varepsilon \left( \sum_n \delta_n^2 \right)^2.
\end{equation}
Hence, for small or moderate \( \| \delta \|_{\ell^2} \), the functional is coercive:
\begin{equation}
\mathcal{E}[\tau] \geq \frac{\delta}{4} \|\delta\|^2_{\ell^2} - C \|\delta\|^3_{\ell^2} - C'.
\end{equation}
The flow is gradient, and
\begin{equation}
\frac{d\mathcal{E}}{d\tau} = -\sum_n \left( \frac{d\delta_n}{d\tau} \right)^2 \leq 0,
\end{equation}
so the energy is non-increasing and the trajectory remains bounded.

Now define the weighted norm
\begin{equation}
\| \delta \|^2_{\ell^2_{2m}} := \sum_n (1 + n)^{2m} \delta_n^2, \quad m > \tfrac{3}{2}.
\end{equation}
Differentiating,
\begin{equation}
\frac{d}{d\tau} \| \delta \|^2_{\ell^2_{2m}} = 2 \sum_n (1+n)^{2m} \delta_n \dot{\delta}_n.
\end{equation}
Using estimates for \( g_{nkm} \) and Young's inequality, we obtain
\begin{equation}
\frac{d}{d\tau} \| \delta \|^2_{\ell^2_{2m}} \leq -2\delta \| \delta \|^2_{\ell^2_{2m}} + C \| \delta \|_{\ell^2_{2m}} \left( \| \delta \|^2_{\ell^2} + \| \delta \|_{\ell^2} + 1 \right),
\end{equation}
so \( \| \delta \|_{\ell^2_{2m}} \) remains bounded, ensuring smoothness of the deformation profile.

To prove exponential convergence, compute
\begin{equation}
\frac{d}{d\tau} \| \delta \|^2_{\ell^2} = -2 \sum_n \alpha_n \delta_n^2 + 2 \sum_n \delta_n \sum_{k,m} g_{nkm} C_k C_m.
\end{equation}
The nonlinear term is controlled:
\begin{equation}
\left| \sum_n \delta_n \sum_{k,m} g_{nkm} C_k C_m \right| \leq C \| \delta \|^2_{\ell^2} \left( \| \delta \|_{\ell^2} + 1 \right),
\end{equation}
yielding
\begin{equation}
\frac{d}{d\tau} \| \delta \|^2_{\ell^2} \leq -2\delta \| \delta \|^2_{\ell^2} + C \| \delta \|^3_{\ell^2}.
\end{equation}
Thus, for small enough \( \| \delta(0) \|_{\ell^2} \), Gronwall's inequality implies exponential decay. For arbitrary initial \( \delta(0) \in \ell^2 \), the monotonicity and coercivity of \( \mathcal{E} \) ensure that the trajectory eventually enters a sufficiently small neighborhood of the equilibrium. Once in that regime, the same Gronwall argument yields exponential convergence. Therefore, the decay holds globally within the present perturbative control framework.
\end{proof}

\begin{remark}
The condition \( m > \frac{3}{2} \) ensures sufficient decay of high-frequency components in the initial profile, making the evolution smooth and stable. This result confirms that the fixed point \( C_n = \pi \) is globally attractive under nonlinear perturbations within the present deformation-spectral framework, which is essential for the dynamical stabilization results proved above.
\end{remark}

\subsection{Canonical Structure and Quantization}

The variational spectral flow constructed in the preceding subsections admits a natural canonical formulation, allowing for an extension to a quantum theory of spectral geometry. In this section, we identify the canonical phase space associated with the finite-energy deviation variables \( \{\delta_n(\tau)\} \), introduce conjugate momenta, and derive the corresponding Poisson structure. This construction reveals that the dynamics of geometric deformation modes can be interpreted as a field-theoretic system defined over the spectral index \( n \), and provides the foundation for quantization \cite{reed-simon1, glimm-jaffe}.

We begin by recalling that the Lagrangian for the spectral amplitudes is given by
\begin{equation}
\mathcal{L}(\tau) = \sum_{n=0}^\infty \left[ \frac{1}{2} \dot{C}_n^2(\tau) - \frac{\alpha_n}{2} (C_n(\tau) - \pi)^2 \right],
\end{equation}
where \( \alpha_n > 0 \) are fixed spectral weights. In deviation variables \( \delta_n(\tau):=C_n(\tau)-\pi \), this becomes
\begin{equation}
\mathcal{L}(\tau) = \sum_{n=0}^\infty \left[ \frac{1}{2} \dot{\delta}_n^2(\tau) - \frac{\alpha_n}{2} \delta_n^2(\tau) \right].
\end{equation}
The conjugate momentum associated with each deviation mode is
\begin{equation}
P_n(\tau) := \frac{\partial \mathcal{L}}{\partial \dot{\delta}_n} = \dot{\delta}_n(\tau),
\end{equation}
and the canonical phase space is naturally identified with \( \ell^2 \times \ell^2 \) in deviation variables, equivalently with the affine space \( (\pi+\ell^2)\times\ell^2 \) in amplitude variables. The symplectic structure on this space is given by the canonical Poisson brackets
\begin{equation}
\{ \delta_n, P_m \} = \delta_{nm}, \qquad \{ \delta_n, \delta_m \} = \{ P_n, P_m \} = 0.
\end{equation}

The corresponding Hamiltonian functional reads
\begin{equation}
\mathcal{H}[\delta, P] = \sum_{n=0}^\infty \left[ \frac{1}{2} P_n^2 + \frac{\alpha_n}{2} \delta_n^2 \right],
\end{equation}
which is positive-definite and strictly convex. The Hamilton equations of motion are:
\begin{equation}
\dot{\delta}_n = \frac{\partial \mathcal{H}}{\partial P_n} = P_n, \qquad
\dot{P}_n = -\frac{\partial \mathcal{H}}{\partial \delta_n} = -\alpha_n\delta_n,
\end{equation}
yielding the same second-order evolution equation
\begin{equation}
\ddot{\delta}_n + \alpha_n \delta_n = 0,
\end{equation}
previously derived from the variational action. This confirms the consistency between the Lagrangian and Hamiltonian formulations \cite{reed-simon1}.

To quantize the spectral dynamics, we promote the canonical deviation variables \( \delta_n, P_n \) to self-adjoint operators on a Hilbert space \( \mathcal{H}_{\mathrm{geom}} \), satisfying the canonical commutation relations:
\begin{equation}
[\hat{\delta}_n, \hat{P}_m] = i\hbar \delta_{nm}, \qquad
[\hat{\delta}_n, \hat{\delta}_m] = [\hat{P}_n, \hat{P}_m] = 0.
\end{equation}
This defines an infinite family of quantum harmonic oscillators, each centered at the symmetric vacuum \( \delta_n = 0 \) (equivalently \( C_n = \pi \)). The quantum Hamiltonian takes the form:
\begin{equation}
\hat{\mathcal{H}} = \sum_{n=0}^\infty \left[ \frac{1}{2} \hat{P}_n^2 + \frac{\alpha_n}{2} \hat{\delta}_n^2 \right],
\end{equation}
and generates unitary time evolution in \( \mathcal{H}_{\mathrm{geom}} \). The ground state of this system corresponds to the product of Gaussians centered at \( C_n = \pi \), and fluctuations around this vacuum encode quantum corrections to the classical deformation geometry \cite{glimm-jaffe, moretti}.

This quantization procedure reveals that the geometric flow constructed in this work admits a natural embedding into the formalism of canonical quantum field theory, where the spectral index \( n \) plays the role of a mode label, and the geometry is described by the collective quantum state of the deformation amplitudes. The resulting structure provides a nonperturbative quantization of scalar geometry in terms of spectral degrees of freedom, distinct from traditional approaches based on the metric or curvature \cite{hall-qft}.

\section{Spectral Rigidity for Simply-Connected Closed Manifolds}
\label{sec:global-classification}

\begin{remark}[Connection with Conformal Flow]
In \cite{alexa2025-flow}, it was shown that the scalar flow \( C(v, \tau) \to \pi \) within the conformally homogeneous class leads to convergence to the round 3-sphere \( S^3 \). The present paper keeps the dynamical picture, but the geometric conclusions are formulated more carefully: manifold information enters through the deformation-spectrum encoding \eqref{eq:deformation-spectrum}, and the rigidity statements below are formulated within that deformation framework.
\end{remark}

In this section, we isolate the geometric content of the flow from its purely dynamical content. The flow itself selects a spherical asymptotic profile in mode space, but the classification statements are formulated as rigidity criteria for manifold encodings whose deformation spectral invariants match that profile. This removes the need to interpret the effective mode dynamics as a fully defined geometric flow on arbitrary manifolds.

We begin by recalling that the spectral evolution constructed in Section~II defines a mode-space dynamics whose finite-energy content is carried by the deviations \( \delta_n(\tau)=C_n(\tau)-\pi \in \ell^2 \). The effective spectral flow may be written as
\begin{equation}
\frac{dC_n}{d\tau} = -\alpha_n(C_n - \pi) + \sum_{k,m} g_{nkm} C_k C_m,
\label{eq:spectral-flow-nonlinear}
\end{equation}
with finite-energy initial deviation profile \( \delta(0)\in\ell^2 \), and \( \alpha_n > 0 \). As established in Theorem~\ref{thm:nonlinear-global-convergence}, the system admits a unique global deviation solution with modewise convergence
\begin{equation}
C_n(\tau) \longrightarrow \pi \qquad \text{as } \tau \to \infty.
\end{equation}

The point of the present section is not that the flow by itself classifies arbitrary manifolds. Rather, it identifies which manifold encodings are compatible with the spherical asymptotic profile selected by the flow.

\begin{lemma}[Constant Spectral Configuration is Not Square-Summable]
\label{lem:no-constant}
The infinite sequence \( (C_n)_{n \in \mathbb{N}_0} = (\pi, \pi, \pi, \dots) \) does not belong to \( \ell^2(\mathbb{N}_0) \). That is,
\begin{equation}
\sum_{n=0}^\infty \pi^2 = +\infty.
\end{equation}
In particular, there exists no function \( C(v) \in D(\hat{C}) \subset L^2([-v_c, v_c]) \) such that
\begin{equation}
\langle C, \psi_n \rangle = \pi \quad \text{for all } n \in \mathbb{N}_0.
\end{equation}
\end{lemma}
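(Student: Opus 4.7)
The statement has two parts and both are essentially immediate consequences of standard Hilbert-space facts that have already been set up in the excerpt; the plan is therefore to write a short two-step proof rather than hunt for a deep mechanism.

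For the first part I would just compute directly: the series $\sum_{n=0}^\infty \pi^2$ is the sum of a positive constant over a countably infinite index set, so it diverges to $+\infty$. This makes it evident that $(\pi,\pi,\pi,\dots)\notin\ell^2(\mathbb{N}_0)$, since membership in $\ell^2$ is exactly the requirement that this sum be finite.

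For the second part the natural tool is Parseval's identity for the orthonormal basis $\{\psi_n\}$, which is available because the spectral subsection has already established that $\{\psi_n\}_{n\in\mathbb{N}_0}$ is a complete orthonormal system in $L^2([-v_c,v_c])$. Concretely, I would argue by contradiction: suppose some $C\in D(\hat C)\subset L^2([-v_c,v_c])$ satisfied $\langle C,\psi_n\rangle=\pi$ for every $n$. Then by Parseval,
\begin{equation}
\|C\|_{L^2}^2 \;=\; \sum_{n=0}^\infty |\langle C,\psi_n\rangle|^2 \;=\; \sum_{n=0}^\infty \pi^2 \;=\; +\infty,
\end{equation}
contradicting $C\in L^2([-v_c,v_c])$. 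Hence no such $C$ exists, and a fortiori none exists in the smaller domain $D(\hat C)=H^2\cap H^1_0$.

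There is no genuine obstacle here; the only subtlety worth flagging explicitly in the write-up is that the argument depends on the completeness of $\{\psi_n\}$ (so that Bessel's inequality upgrades to Parseval equality), which is precisely the completeness statement imported from the spectral theory of $\hat C$ established in \cite{alexa2025-spectrum}. I would close with a one-line remark tying the result back to the thematic point of the section: the ``symmetric limit'' $C_n\equiv\pi$ lives on the boundary of, but outside, the admissible spectral configuration space $\ell^2$, so convergence $C_n(\tau)\to\pi$ must be interpreted componentwise (or in any weaker topology that tolerates a constant tail), not as convergence to an element of $\ell^2$.
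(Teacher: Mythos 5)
Your proposal is correct and matches the paper's own argument: both establish divergence of the constant series and then derive a contradiction via Parseval's identity for the orthonormal basis \( \{\psi_n\} \), so no admissible \( C \in L^2([-v_c,v_c]) \) can have all coefficients equal to \( \pi \). (As a minor aside, Bessel's inequality alone would already suffice for the contradiction, so even completeness is not strictly needed, but this is a refinement rather than a difference in approach.)
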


\begin{proof}
Let us assume that such a function \( C(v) \in L^2([-v_c, v_c]) \) exists with Fourier coefficients
\begin{equation}
C_n := \langle C, \psi_n \rangle = \pi, \quad \forall n \in \mathbb{N}_0.
\end{equation}
Then by Parseval's identity, the \( L^2 \)-norm of \( C \) is given by
\begin{equation}
\| C \|^2 = \sum_{n=0}^\infty |C_n|^2 = \sum_{n=0}^\infty \pi^2 = \pi^2 \sum_{n=0}^\infty 1 = +\infty,
\end{equation}
which contradicts the assumption \( C \in L^2 \). Hence, no such function exists, and the constant configuration \( C_n = \pi \ \forall n \) lies outside \( \ell^2 \). \qedhere
\end{proof}

\begin{theorem}[Spectral Rigidity Criterion for Simply-Connected Manifolds]
\label{thm:spectral-classification}
Let \( (M^d,g) \) be a smooth, compact, simply-connected Riemannian manifold of dimension \( d \ge 3 \), equipped with raw deformation spectrum \( \{C_n(M,g)\} \) as in \eqref{eq:deformation-spectrum} and renormalized representative \( \widetilde C^{(q)}(M,g;S^d) \) as in \eqref{eq:renormalized-spectrum}. Suppose that the deformation spectral invariants of \( (M^d,g) \) coincide with those of the standard sphere \( S^d \):
\begin{itemize}
    \item[(i)] \( a_j(M^d)=a_j(S^d) \) for all Weyl heat coefficients,
    \item[(ii)] the relevant global spectral invariants agree with those of \( S^d \), in particular the \( \eta \)-invariant and analytic torsion in the present framework,
    \item[(iii)] the comparison is made within the fixed deformation-spectral framework determined by \( \hat{C} \), the raw encoding \eqref{eq:deformation-spectrum}, and the finite-energy representative \eqref{eq:renormalized-spectrum}.
\end{itemize}
Then \( (M^d,g) \) belongs to the spherical rigidity class within the present deformation-spectral framework. In particular:
\begin{itemize}
\item[(a)] for \( d=3 \) and \( d\ge 5 \), classical rigidity and topology results yield \( M^d \cong_{\mathrm{diff}} S^d \);
\item[(b)] for \( d=4 \), one obtains the topological conclusion \( M^4 \cong_{\mathrm{top}} S^4 \), together with a spectral obstruction against exotic smooth structures that produce distinct invariant values.
\end{itemize}
\end{theorem}

\begin{proof}
The Weyl heat coefficients encode the local geometric data of \( (M^d,g) \), while the global spectral invariants encode asymmetry and torsion information. By hypothesis, this entire package agrees with that of \( S^d \) inside the fixed deformation framework.

For \( d=3 \), the stated invariant coincidence places \( M^3 \) in the spherical rigidity class of the present framework, and Perelman's geometrization theorem then yields \( M^3 \cong_{\mathrm{diff}} S^3 \).

For \( d\ge 5 \), the same invariant coincidence identifies the spherical rigidity class within the present framework, and the corresponding high-dimensional rigidity and topology results yield \( M^d \cong_{\mathrm{diff}} S^d \).

For \( d=4 \), the coincidence of the invariant package gives the topological conclusion \( M^4 \cong_{\mathrm{top}} S^4 \) by Freedman's theorem. The smooth Poincare problem in dimension four remains open, so the strongest unconditional statement is topological. Within the present framework, however, any exotic smoothing that changes the deformation spectral invariants is excluded.
\end{proof}

\begin{corollary}[Rigidity under Full Invariant Coincidence]
\label{cor:isospectral}
Let \( M_1^d \) and \( M_2^d \) be smooth closed manifolds of dimension \( d \ge 3 \) whose deformation spectral invariant packages coincide with each other and with those of \( S^d \). Then, for \( d=3 \) and \( d\ge 5 \), \( M_1^d \) and \( M_2^d \) are diffeomorphic; for \( d=4 \), they are homeomorphic.

In particular, inside the present deformation framework, bare isospectrality is replaced by a stronger invariant package, and it is this stronger package that yields rigidity.
\end{corollary}

\begin{proof}
By Theorem~\ref{thm:spectral-classification}, each manifold satisfying the stated invariant coincidence has the corresponding spherical rigidity conclusion. Therefore the two manifolds agree up to the dimension-appropriate equivalence.
\end{proof}

\begin{corollary}[Uniqueness of the Symmetric Asymptotic Profile]
\label{cor:unique-symmetric}
The constant configuration \( C(v) \equiv \pi \) is the only spectrally symmetric limit point of the flow, but it lies outside the domain \( D(\hat{C}) \). Thus, no admissible function in \( D(\hat{C}) \) can realize perfect spectral symmetry \( C_n = \pi \ \forall n \), and the spherical profile is uniquely characterized as the asymptotic spectral attractor within the present framework.
\end{corollary}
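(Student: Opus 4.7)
The plan is to deduce the three assertions of the corollary—uniqueness of the symmetric limit, its non-membership in $D(\hat{C})$, and the identification of $S^d$ as the attractor—directly from Lemma~\ref{lem:no-constant} and Theorem~\ref{thm:spectral-classification}, with the Dirichlet boundary condition built into $D(\hat{C})$ supplying an independent obstruction. First I would interpret the phrase "spectrally symmetric limit point" as referring to any accumulation point of $\{C_n(\tau)\}$ in $\mathbb{R}^{\mathbb{N}_0}$ for which all amplitudes coincide. By Theorem~\ref{thm:global-convergence} and its nonlinear extension, every trajectory satisfies $C_n(\tau)\to\pi$ componentwise, so the unique candidate is the constant sequence $(\pi,\pi,\pi,\dots)$, which corresponds via formal spectral synthesis with the Dirichlet basis $\{\psi_n\}$ to the constant profile $C(v)\equiv\pi$.

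Next I would establish that this profile cannot belong to $D(\hat{C})$ by two independent routes. The first is Lemma~\ref{lem:no-constant} itself: by Parseval's identity applied to the complete orthonormal basis $\{\psi_n\}$, any $L^2$ function with all Fourier coefficients equal to $\pi$ would have infinite norm, so no element of $L^2([-v_c,v_c])$—and a fortiori of $D(\hat{C})$—can realize this spectrum. The second route uses the explicit description $D(\hat{C})=H^2([-v_c,v_c])\cap H^1_0([-v_c,v_c])$: the Dirichlet condition forces any admissible $C$ to vanish at $v=\pm v_c$, while the constant $\pi$ does not, giving a second obstruction that is topological rather than summability-based. Together these rule out any admissible configuration with $C_n=\pi$ for all $n$. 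Finally, Theorem~\ref{thm:spectral-classification} identifies the underlying manifold of any flow exhibiting this asymptotic profile as diffeomorphic to $S^d$; since no other smooth closed simply-connected $d$-manifold supports this limit, the sphere is singled out as the unique attractor.

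The main conceptual issue—and the point I would address with most care—is the apparent contradiction that the flow converges to a point lying outside its own configuration space. I would resolve it by emphasizing that the relevant convergence is measured in the shifted norm, namely the $\ell^2$-norm of the deviation $\delta_n(\tau):=C_n(\tau)-\pi$, which decays to zero exponentially without ever requiring the reference sequence $(\pi,\pi,\dots)$ to lie in $\ell^2$ itself. Every finite-time profile stays in $D(\hat{C})$, while the "limit" is an ideal boundary point of the closure of this domain in the shifted topology. Thus $S^d$ appears as the asymptotic attractor in a rigorous but strictly idealized sense—approached in every mode yet never realized by an admissible function—in structural parallel with the behavior of normalized Ricci flow toward the round metric on $S^d$.
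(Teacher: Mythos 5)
Your proposal is correct, and its second route is exactly the paper's own argument: the paper proves the corollary in one line by invoking spectral rigidity (assumption (iii) of Theorem~\ref{thm:spectral-classification}) to note that the constant profile \( C(v)\equiv\pi \) violates the Dirichlet boundary conditions and therefore lies outside \( D(\hat{C}) = H^2 \cap H^1_0 \), so the symmetric configuration is realized only asymptotically. What you add beyond the paper is the independent Parseval obstruction via Lemma~\ref{lem:no-constant} (which the paper deploys in the proof of Theorem~\ref{thm:spectral-classification} but not in this corollary), together with a cleaner separation of the two distinct facts that are conflated in the statement: the coefficient sequence \( (\pi,\pi,\dots) \) is not in \( \ell^2 \) and hence corresponds to no \( L^2 \) function at all, whereas the genuine constant function \( \pi \) does lie in \( L^2 \) but fails the boundary condition. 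Your closing observation that convergence is measured in the deviation norm \( \|C_n(\tau)-\pi\|_{\ell^2} \), so the flow never needs the reference point to belong to its configuration space, makes precise the paper's informal remark that the symmetric state is "only realized asymptotically," and is a worthwhile clarification rather than a deviation in method.
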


\begin{proof}
By Lemma~\ref{lem:no-constant}, the constant spectral configuration does not define an admissible element of the mode space or of \( D(\hat{C}) \). Within the fixed operator framework of \cite{alexa2025-spectrum}, this leaves the symmetric configuration as an asymptotic profile only, realized in modewise limit form as \( \tau \to \infty \).
\end{proof}

\begin{remark}[Role of the Spectral Flow]
The spectral flow remains the organizing dynamical principle of the paper: it selects the spherical asymptotic profile and supplies the energy, entropy, and density laws studied in subsequent sections. Theorem~\ref{thm:spectral-classification}, however, is a static rigidity criterion for manifold encodings whose deformation spectral invariants already match those of the spherical profile. This distinction is what allows the paper to retain its dynamical core while avoiding an unsupported claim that the effective mode dynamics is itself a tensorial manifold flow.
\end{remark}

\section{Asymptotic Behavior and Spectral Stabilization}
\label{sec:spectral-stabilization}

The spectral flow introduced in Sections~\ref{sec:variational-flow}--\ref{sec:global-classification} defines a variational evolution of geometric deformation amplitudes \( \{C_n(\tau)\} \in \pi+\ell^2 \), converging toward the symmetric configuration \( C_n = \pi \) modewise and toward \( \delta_n=0 \) in \( \ell^2 \). The raw manifold encoding \eqref{eq:deformation-spectrum} need not itself lie in this phase space; the actual finite-energy comparison with geometry is performed through the renormalized representatives \eqref{eq:renormalized-spectrum}. What this asymptotic convergence identifies is the target spectral configuration in mode space; the geometric classification statements require, in addition, coincidence of the deformation invariant package.
In this section, we strengthen the asymptotic analysis by providing precise quantitative estimates on the convergence rate, entropy decay, and spectral stabilization mechanism. We show that the deviation \( \delta_n(\tau) := C_n(\tau) - \pi \), with \( \delta := \inf_{n \in \mathbb{N}_0} \alpha_n > 0 \), decays exponentially in \( \ell^2 \), and that this behavior controls the regularity and smoothness of the reconstructed deformation profile \( C(v,\tau) \). We also introduce spectral entropy scaling laws and identify dimension-specific features, particularly in the critical case \( d = 4 \), where smooth classification intersects with unresolved topological problems. For \( d \ge 5 \), the geometric conclusions remain rigidity statements formulated in terms of the invariant package together with compatibility of the renormalized profile.

\begin{remark}[Notation]
Throughout, we write \( f_n \asymp g_n \) to denote two-sided asymptotic equivalence up to constants: there exist constants \( c_1, c_2 > 0 \) such that
\begin{equation}
c_1 g_n \le f_n \le c_2 g_n \quad \text{for all sufficiently large } n.
\end{equation}
\end{remark}

\subsection{Exponential Decay of Deviations}
\label{subsec:exp-decay}

We begin by establishing a quantitative estimate on the deviation
\begin{equation}
\delta_n(\tau) := C_n(\tau) - \pi, \qquad \delta := \inf_{n \in \mathbb{N}_0} \alpha_n > 0,
\end{equation}
under the linear gradient flow
\begin{equation}
\frac{dC_n}{d\tau} = -\alpha_n(C_n - \pi), \qquad \alpha_n > 0.
\end{equation}
The explicit solution is
\begin{equation}
\delta_n(\tau) = \delta_n(0) \, e^{-\alpha_n \tau}, \qquad \delta_n(0) := C_n(0) - \pi.
\end{equation}
Let us define the total deviation norm:
\begin{equation}
\bigl\| \delta(\tau) \bigr\|_{\ell^2}^2 := \displaystyle\sum_{n=0}^\infty |C_n(\tau) - \pi|^2.
\end{equation}

\begin{lemma}[Exponential Decay in \(\ell^2\)]
\label{lem:exp-decay}
Let \( \{C_n(\tau)\} \) solve the linear gradient flow with \( \alpha_n \ge \delta > 0 \) for all \( n \). Then the deviation satisfies
\begin{equation}
\bigl\| \delta(\tau) \bigr\|_{\ell^2} \le \bigl\| \delta(0) \bigr\|_{\ell^2} \, e^{-\delta \tau},
\end{equation}
for all \( \tau \ge 0 \). In particular, convergence to the symmetric configuration is exponential in norm.
\end{lemma}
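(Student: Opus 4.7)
The plan is to exploit the fact that the linear gradient flow is diagonal in the spectral basis, so that the infinite-dimensional estimate reduces to a pointwise bound on each mode followed by summation. First I would integrate each scalar ODE $\dot{C}_n = -\alpha_n(C_n - \pi)$ explicitly, which is already done in the proof of Theorem~\ref{thm:global-convergence} and yields $\delta_n(\tau) = \delta_n(0) \, e^{-\alpha_n \tau}$. Then, using the uniform spectral gap hypothesis $\alpha_n \ge \delta$, I would bound each decay factor by the common rate $e^{-\delta \tau}$, giving the pointwise inequality $|\delta_n(\tau)|^2 \le |\delta_n(0)|^2 \, e^{-2\delta \tau}$ independent of $n$.

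The second step is to sum this pointwise bound over $n$. Since the exponential factor is $n$-independent, it factors out of the sum, yielding $\|\delta(\tau)\|_{\ell^2}^2 \le e^{-2\delta\tau} \|\delta(0)\|_{\ell^2}^2$, and taking a square root gives the claimed estimate. Along the way I would note that the monotone bound $|\delta_n(\tau)| \le |\delta_n(0)|$ preserves square-summability for every $\tau \ge 0$, so the lemma is consistent with the hypothesis $\{C_n(0)\} \in \ell^2$.

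It is worth presenting a parallel energy-method derivation, since it generalizes beyond the linear regime. Differentiating $\|\delta(\tau)\|_{\ell^2}^2$ termwise and substituting the flow equation gives $\tfrac{d}{d\tau}\|\delta\|_{\ell^2}^2 = -2\sum_n \alpha_n \delta_n^2 \le -2\delta \|\delta\|_{\ell^2}^2$, and Grönwall's inequality then yields the same exponential bound. This alternative is useful because it adapts directly to the nonlinear flow of the previous subsection, where mode decoupling is lost but the same dissipation structure controls the $\ell^2$-norm.

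There is no serious obstacle here: the flow is linear and diagonal in the eigenbasis, and the uniform spectral gap $\delta > 0$ is assumed. The only technical subtlety is the termwise differentiation of $\|\delta(\tau)\|_{\ell^2}^2$ in the energy approach, which must be justified when $\sup_n \alpha_n = +\infty$; this follows from dominated convergence applied to the envelope $|\delta_n(0)|^2 e^{-2\delta\tau}$, or can be sidestepped entirely by invoking the closed-form solution. Thus the lemma reduces to a one-line computation, and its real content is to fix the quantitative rate $\delta$ that will be used in the entropy and smoothness estimates of the subsections that follow.
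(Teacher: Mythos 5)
Your argument is correct and coincides with the paper's own proof: solve each mode explicitly as \( \delta_n(\tau) = \delta_n(0)e^{-\alpha_n\tau} \), bound the decay factor uniformly by \( e^{-\delta\tau} \) using \( \alpha_n \ge \delta \), sum over \( n \), and take square roots. The supplementary Gr\"onwall/energy derivation you sketch is a valid alternative but is not needed here; the paper relies solely on the closed-form solution.
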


\begin{proof}
Since \( \delta_n(\tau) = \delta_n(0) e^{-\alpha_n \tau} \), and \( \alpha_n \ge \delta \), we estimate
\begin{equation}
|\delta_n(\tau)| \le |\delta_n(0)| e^{-\delta \tau},
\end{equation}
hence
\begin{multline*}
\bigl\| \delta(\tau) \bigr\|_{\ell^2}^2 = \sum_{n=0}^\infty |\delta_n(0)|^2 e^{-2\alpha_n \tau} \\
\le e^{-2\delta \tau} \sum_{n=0}^\infty |\delta_n(0)|^2
= \bigl\| \delta(0) \bigr\|_{\ell^2}^2 e^{-2\delta \tau}.
\end{multline*}

Taking square roots yields the result.
\end{proof}

This lemma implies that all high-frequency deviations decay uniformly with respect to \( n \), and the entire spectral configuration flows exponentially toward the fixed point \( C_n = \pi \). As a consequence, the reconstructed profile \( C(v, \tau) = \sum C_n(\tau) \psi_n(v) \) converges in \( L^2 \) to the symmetric limit by spectral completeness~\cite[Thm.~1]{alexa2025-spectrum} and the Inverse Limit Convergence Lemma~4 in~\cite{alexa2025-spectrum}, and, under smoothness assumptions on \( \psi_n \), in the \( C^\infty \)-topology as well.

\begin{remark}[Intrinsic Versus Encoded Spectral Asymptotics]
In the original analysis of the one-dimensional operator $\hat{C}$, the intrinsic operator spectrum satisfies
\begin{equation}
C_n^{\mathrm{op}} \sim \pi - \kappa_{\mathrm{op}} n^2
\end{equation}
on the fixed interval~\cite[Lemma~3]{alexa2025-spectrum}. By contrast, when compact \( d \)-dimensional manifold data are inserted through the affine encoding
\begin{equation}
C_n^{\mathrm{enc}}(M,g) := \pi - \epsilon \lambda_n(-\Delta_g),
\end{equation}
the encoded spectrum inherits the Weyl asymptotics
\begin{equation}
C_n^{\mathrm{enc}} \sim \pi - \kappa_{\mathrm{enc}} n^{2/d}.
\end{equation}
The dynamical flow of the present paper acts on renormalized deviations over the fixed operator basis, whereas the dimension-recovery statements below concern the raw encoded manifold spectrum in the bulk regime.
\end{remark}

We now introduce the notion of spectral entropy, defined as
\begin{equation}
\mathcal{S}(\tau) := - \sum_{n=0}^\infty \frac{\delta_n^2(\tau)}{\| \delta(\tau) \|_{\ell^2}^2} \log \left( \frac{\delta_n^2(\tau)}{\| \delta(\tau) \|_{\ell^2}^2} \right),
\end{equation}
and analyze its decay and dimensional scaling properties under the flow.

\subsection{Spectral Entropy and Dimensional Scaling}
\label{subsec:spectral-entropy}

To quantify the distribution of deviation energy across spectral modes, we introduce the notion of \emph{spectral entropy}, inspired by classical information theory and adapted to the variational flow setting.

Let \( \delta_n(\tau) := C_n(\tau) - \pi \), and define the normalized energy distribution:
\begin{equation}
p_n(\tau) := \frac{\delta_n^2(\tau)}{\| \delta(\tau) \|_{\ell^2}^2}, \qquad \sum_{n=0}^\infty p_n(\tau) = 1,
\label{def:probability}
\end{equation}
which defines a probability measure on the index set \( \mathbb{N}_0 \), encoding the relative contribution of each mode to the deviation profile at time \( \tau \). Throughout this section, we fix a small constant \( \varepsilon > 0 \), e.g., \( \varepsilon = 10^{-3} \), for truncation purposes.

\begin{definition}[Spectral Entropy]
\label{def:entropy}
The spectral entropy at time \( \tau \) is given by
\begin{equation}
\mathcal{S}(\tau) := - \sum_{n=0}^\infty p_n(\tau) \log p_n(\tau),
\end{equation}
with the convention \( 0 \log 0 := 0 \). It measures the complexity and spread of the deviation across spectral modes.
\end{definition}

The entropy satisfies the basic bound
\begin{equation}
0 \le \mathcal{S}(\tau) \le \log N(\tau),
\end{equation}
where \( N(\tau) \) is the effective number of active modes satisfying
\begin{equation}
\sum_{n=0}^{N(\tau)} p_n(\tau) \ge 1 - \varepsilon.
\end{equation}

\begin{lemma}[Entropy Decay]
\label{lem:entropy-decay}
Suppose the deviations satisfy \( \delta_n(\tau) = \delta_n(0) e^{-\alpha_n \tau} \), with \( \alpha_n \ge c n^2 \) for all \( n \ge n_0 \). Then \( \mathcal{S}(\tau) \to 0 \) as \( \tau \to \infty \). More precisely, the entropy decays at an essentially exponential rate, up to logarithmic corrections coming from the effective number of active modes.
\end{lemma}

\begin{proof}
For high modes \( n \ge n_0 \), we have
\begin{equation}
\delta_n^2(\tau) \le \delta_n^2(0) \cdot e^{-2 c n^2 \tau},
\end{equation}
so
\begin{equation}
\sum_{n \ge N} p_n(\tau) \le \frac{1}{\| \delta(\tau) \|^2_{\ell^2}} \sum_{n \ge N} \delta_n^2(0) e^{-2 c n^2 \tau} \le B \sum_{n \ge N} e^{-2 c n^2 \tau}.
\end{equation}
Now apply the Gaussian tail estimate:
\begin{equation}
\sum_{n \ge N} e^{-2 c n^2 \tau} \le \int_{N-1}^{\infty} e^{-2 c x^2 \tau} dx \le \frac{e^{-2 c (N - 1)^2 \tau}}{4 c (N - 1)\sqrt{\pi \tau}}.
\end{equation}
Choose \( N(\tau) := \lceil \tau^{-1/2} \rceil \), so that \( \tau N(\tau)^2 \ge 1 \), and the tail is exponentially small. Thus the entropy becomes
\begin{equation}
\mathcal{S}(\tau) \le \log N(\tau) + \varepsilon \log \left( \frac{1}{\varepsilon} \right),
\end{equation}
so the entropy is controlled by an exponentially small tail together with the logarithmic contribution of the effective number of active modes. In particular, \( \mathcal{S}(\tau) \to 0 \) with exponential rate up to logarithmic corrections.
\end{proof}

\begin{remark}[Spectral Suppression and Entropy Decay]
The exponential suppression of high-frequency deviations is compatible with the intrinsic operator asymptotic \( C_n^{\mathrm{op}} \sim \pi - \kappa_{\mathrm{op}} n^2 \), established in~\cite[Lemma 3]{alexa2025-spectrum}, which justifies the rapid entropy decay observed under the mode flow. The higher-dimensional exponent \( 2/d \) appears separately at the level of the affine encoded manifold spectrum; see Section~\ref{sec:exotic-exclusion}.
\end{remark}

\begin{proposition}[Dimensional Scaling of Entropy]
\label{prop:entropy-scaling}
Let \( d \ge 3 \), and suppose the initial deviations satisfy \( \delta_n(0) \asymp n^{-\beta} \) for some \( \beta > \frac{d-1}{2} \). Then in the small-time regime \( 0 < \tau \ll 1 \), the spectral entropy satisfies
\begin{equation}
\mathcal{S}(\tau) \sim \frac{d-1}{2} \log(\tau^{-1}) + O(1).
\end{equation}
\end{proposition}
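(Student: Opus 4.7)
The plan is to combine the explicit exponential solution $\delta_n(\tau) = \delta_n(0)\,e^{-\alpha_n\tau}$ provided by Lemma~\ref{lem:exp-decay} with the generalized spectral asymptotic $\alpha_n \asymp n^{2/d}$ highlighted in the preceding Remark, and then to extract the logarithmic growth of $\mathcal{S}(\tau)$ by a continuum approximation tuned to the natural cutoff scale $N(\tau) := \tau^{-d/2}$. Substituting the hypothesis $|\delta_n(0)| \asymp n^{-\beta}$, the normalized weights take the form
\begin{equation}
p_n(\tau) \asymp \frac{n^{-2\beta}\, e^{-2c\, n^{2/d}\tau}}{Z(\tau)}, \qquad Z(\tau) := \|\delta(\tau)\|_{\ell^2}^2,
\end{equation}
so that the exponential factor partitions the modes into essentially unperturbed ones ($n \ll N(\tau)$, still obeying $p_n \propto n^{-2\beta}$) and exponentially suppressed ones ($n \gg N(\tau)$). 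The strategy is to convert the relevant sums into Laplace-type integrals in which this cutoff governs the leading logarithm.

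Concretely, I would proceed in three stages. First, replace both $Z(\tau)$ and $\mathcal{S}(\tau) = -\sum_n p_n\log p_n$ by integrals over $n \in [1,\infty)$ via Euler--Maclaurin, justified by the Gaussian-type decay of $e^{-2cn^{2/d}\tau}$ and the smoothness of the integrand. Second, perform the rescaling $n = \tau^{-d/2} u$, which turns each integral into a $\tau$-independent Laplace-type integral dressed by an explicit $\tau$-dependent prefactor and shifts the leading $\tau$-dependence into the averaged logarithm. Third, letting $\langle\,\cdot\,\rangle_\tau$ denote expectation against $p_n(\tau)$, expand
\begin{equation}
\mathcal{S}(\tau) = 2\beta\,\langle\log n\rangle_\tau + 2c\tau\,\langle n^{2/d}\rangle_\tau + \log Z(\tau),
\end{equation}
and evaluate each moment in the rescaled variables: $2c\tau\,\langle n^{2/d}\rangle_\tau$ reduces to an $O(1)$ Gamma-integral constant, $\langle\log n\rangle_\tau$ produces the logarithm $\tfrac{d}{2}\log(\tau^{-1}) + O(1)$, and $\log Z(\tau)$ contributes the remaining $\tau$-dependent prefactor; the three logarithmic pieces should then combine to yield the stated coefficient $(d-1)$.

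The main obstacle is matching this precise coefficient $(d-1)$ rather than the naive $d/2$ suggested by the cutoff alone: the delicate cancellations among the power-law weight $n^{-2\beta}$, the Weyl density of states, and the normalization $\log Z(\tau)$ must be tracked exactly, and the hypothesis $\beta > (d-1)/2$ has to be invoked precisely as the sharp regularity threshold that both guarantees convergence of the rescaled integrals at $u \to +\infty$ and makes the Euler--Maclaurin remainders, together with the low-index boundary contributions (where $\delta_n(0) \asymp n^{-\beta}$ is merely asymptotic), uniformly $o(1)$ as $\tau \to 0^+$. Once these asymptotic estimates are in place, the $o(1)$ remainder follows from standard Laplace-method expansions.
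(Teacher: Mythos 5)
There is a genuine gap, and it lies upstream of the difficulty you flag. In your setup the weights are \( p_n(\tau) \asymp n^{-2\beta} e^{-2c\,n^{2/d}\tau}/Z(\tau) \) with unit multiplicity per mode. But the hypothesis \( \beta > \tfrac{d-1}{2} \) forces \( 2\beta > d-1 \ge 2 > 1 \), so \( \sum_n n^{-2\beta} < \infty \): as \( \tau \to 0^+ \) the distribution \( p_n(\tau) \) converges (by dominated convergence) to the summable power law \( p_n(0) \propto n^{-2\beta} \), whose entropy is finite. Each term of your decomposition then stays bounded — \( \langle \log n \rangle_\tau = O(1) \) since \( \sum n^{-2\beta}\log n < \infty \), \( 2c\tau \langle n^{2/d} \rangle_\tau = O(\tau) \) since \( 2/d - 2\beta < -1 \), and \( \log Z(\tau) = \log Z(0) + o(1) \). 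Hence your model gives \( \mathcal{S}(\tau) = O(1) \): no logarithmic divergence appears at all, so there is nothing for the ``delicate cancellations'' to produce; the issue is not that the cutoff \( N(\tau) = \tau^{-d/2} \) naively suggests \( \tfrac{d}{2} \) instead of \( d-1 \), but that the mass never migrates to the cutoff scale in the first place. The rescaling \( n = \tau^{-d/2}u \) is a Laplace-type change of variables for an integrand whose mass actually sits at \( n = O(1) \), so it cannot generate the claimed leading term.

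The missing ingredient, and the structural difference from the paper's argument, is a \( d \)-dimensional density-of-states factor. The paper's proof keeps the one-dimensional exponent \( \alpha_n \gtrsim c\,n^{2} \) in the decay \( \delta_n^2(\tau) \asymp n^{-2\beta} e^{-2cn^{2}\tau} \) and inserts the Weyl multiplicity \( \rho(n) \asymp n^{d-1} \) into the partition sum, \( Z(\tau) \sim \int_1^\infty x^{\,d-1-2\beta} e^{-2cx^{2}\tau}\,dx \), so the dimension enters through the counting measure rather than through a flattened exponent \( n^{2/d} \); the condition \( \beta > \tfrac{d-1}{2} \) is then used as the convergence threshold of the rescaled Gamma-type integral. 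If you want to salvage your route, you must either incorporate that degeneracy weight explicitly (redefining the entropy over degenerate eigenvalue clusters) or abandon the expectation that the bare weights \( n^{-2\beta}e^{-2cn^{2/d}\tau} \) exhibit the \( (d-1)\log(\tau^{-1}) \) growth — as written, your three-stage plan would (if carried out correctly) disprove the asymptotic rather than prove it. Be aware also that equating the entropy with \( \log Z \)-type quantities requires tracking the averaged logarithm against the same weighted measure; this is exactly where the multiplicity factor changes the answer.
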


\begin{proof}[Sketch of proof]
Assume \( \alpha_n \geq c n^2 \) for all \( n \geq n_0 \), with some constant \( c > 0 \). Then
\begin{equation}
\delta_n^2(\tau) \asymp n^{-2\beta} e^{-2 c n^2 \tau}, \qquad \rho(n) \asymp n^{d-1},
\end{equation}
the partition function becomes:
\begin{equation}
Z(\tau) := \sum_{n=1}^\infty n^{-2\beta} e^{-2 c n^2 \tau} \rho(n) \sim \int_1^\infty x^{d - 1 - 2\beta} e^{-2 c x^2 \tau} dx.
\end{equation}
Change variable \( y = x^2 \tau \), then \( x = \sqrt{y/\tau} \), \( dx = \frac{1}{2} y^{-1/2} \tau^{-1/2} dy \), and the integral becomes
\begin{equation}
Z(\tau) \sim \tau^{-(d - 1)/2} \int_{\tau}^{\infty} y^{(d-2)/2 - \beta} e^{-2 c y} dy,
\end{equation}
which converges when \( \beta > \tfrac{d-1}{2} \), yielding
\begin{equation}
\log Z(\tau) \sim \frac{d-1}{2} \log(\tau^{-1}).
\end{equation}
Since \( \mathcal{S}(\tau) = \log Z(\tau) + O(1) \) (the correction being the weighted mean of \( \log \delta_n^{-2} \), which is \( O(1) \) after the change of variable), we conclude \( \mathcal{S}(\tau) \sim \tfrac{d-1}{2} \log(\tau^{-1}) + O(1) \), as claimed.
\end{proof}

\begin{remark}[Entropy as Dimensional Signature]
The coefficient of \( \log(\tau^{-1}) \) in the entropy profile serves as an effective dimensional signature within the present framework. Higher-dimensional geometries exhibit delayed localization under the spectral flow. Thus, entropy decay reflects dimensional information carried by the deformation profile, without by itself determining the full smooth structure.
\end{remark}

\subsection{Spectral Stabilization Lemma}

We now strengthen the asymptotic result from Lemma~\ref{lem:exp-decay}, proving that not only does the deformation profile \( C(v,\tau) \to \pi \) in \( L^2 \), but in fact converges in the \( C^\infty([-v_c, v_c]) \) topology -- provided a mild spectral decay in the initial condition.

\begin{lemma}[Spectral Stabilization]
\label{lem:spectral-stabilization}
Assume there exists an integer \( m \ge k+2 \) such that
\begin{equation}
\sum_{n=0}^\infty (1+n)^{2m} |C_n(0) - \pi|^2 < \infty,
\end{equation}
and that for all \( \tau \ge 0 \), the deviations satisfy
\begin{equation}
|C_n(\tau) - \pi| \le A_0 (1+n)^{-m} e^{-\beta \tau},
\end{equation}
for some constants \( A_0 > 0 \), \( \beta > 0 \). Then the profile
\begin{equation}
C(v,\tau) = \sum_{n=0}^\infty C_n(\tau)\, \psi_n(v)
\end{equation}
converges to \( \pi \) in the \( C^k([-v_c, v_c]) \) topology, and hence in \( C^\infty \) for all \( k \in \mathbb{N}_0 \).
\end{lemma}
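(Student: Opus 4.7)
The plan is to apply the Weierstrass $M$-test to the termwise differentiated spectral series, combining the hypothesized pointwise decay of $|C_n(\tau) - \pi|$ with the eigenfunction derivative bounds from the earlier Lemma on derivative bounds. The goal is to establish uniform-norm decay of every derivative order $\partial_v^k$ of the deviation, at exponential rate in $\tau$, and then let $k \to \infty$ to obtain $C^\infty$ convergence.

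First, I would write the deviation (in the spectral convention used throughout Section II) as
\begin{equation}
C(v,\tau) - \pi = \sum_{n \geq 0} \bigl(C_n(\tau) - \pi\bigr)\,\psi_n(v),
\end{equation}
and differentiate termwise. Combining the derivative bound $\sup_v|\partial_v^k \psi_n(v)| \leq C_k (n+1)^{k+1}$ with the decay hypothesis, each term is uniformly controlled by
\begin{equation}
\bigl|(C_n(\tau) - \pi)\,\partial_v^k \psi_n(v)\bigr| \leq A_0 C_k\, e^{-\beta \tau}\, (1+n)^{k+1-m}.
\end{equation}
The explicit sine form of the eigenfunctions in fact yields the sharper bound $|\partial_v^k \psi_n| \leq \tilde{C}_k (n+1)^k$, improving the exponent to $(1+n)^{k-m}$. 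Under the hypothesis $m \geq k+2$, the resulting series $\sum_n (1+n)^{k-m}$ is summable, and the $M$-test justifies both absolute uniform convergence of the differentiated series on $[-v_c,v_c]$ and the legitimacy of interchanging differentiation and summation.

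Second, summing the majorant yields
\begin{equation}
\bigl\| C(\cdot,\tau) - \pi \bigr\|_{C^k([-v_c,v_c])} \leq K_k \, e^{-\beta \tau}, \qquad K_k := A_0 \tilde{C}_k \sum_{n \geq 0}(1+n)^{k-m} < \infty,
\end{equation}
giving the desired exponential $C^k$ decay. Since the initial-data assumption $\sum_n (1+n)^{2m}|C_n(0)-\pi|^2 < \infty$ can be strengthened to accommodate any $m$ (and the weighted-regularity estimate from Theorem 2 shows that such regularity is propagated by the flow), the argument applies for every $k$. A diagonal selection on $k$ then yields $C^\infty$ convergence.

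The main technical obstacle is the tightness of the derivative estimate at the borderline exponent: the generic bound $(n+1)^{k+1}$ makes the majorant series only logarithmically divergent at $m = k+2$, so invoking the sharper sine-specific estimate $(n+1)^k$ is essential. A secondary subtlety is ensuring that termwise differentiation is valid at every order up to $k$; this follows once the $M$-test bound is established at order $k$, since uniform convergence of the $k$-th differentiated series implies uniform convergence of all lower-order differentiated series by the same majorant (with smaller exponents), and the standard real-analysis theorem on interchanging derivative and sum then propagates downward from order $k$ to order $0$.
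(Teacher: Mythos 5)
Your proposal is correct, but it takes a genuinely different route from the paper. The paper's proof keeps the derivative bound in the weaker stated form \( |\psi_n^{(k)}(v)| \le C_k (n+1)^{k+1} \) and splits the series by Cauchy--Schwarz into a weighted \(\ell^2\) factor \( \bigl(\sum_n (1+n)^{2m}|C_n(\tau)-\pi|^2\bigr)^{1/2} \) (which ties the argument to the weighted initial-data hypothesis and to the \(H^s\) interpretation given in the following remark) and the factor \( \bigl(\sum_n (1+n)^{-2m}|\psi_n^{(k)}(v)|^2\bigr)^{1/2} \), which converges as soon as \( m > k+\tfrac{3}{2} \) --- comfortably satisfied by \( m \ge k+2 \) even with the cruder exponent \(k+1\). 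You instead run a termwise Weierstrass \(M\)-test using only the pointwise decay hypothesis \( |C_n(\tau)-\pi| \le A_0(1+n)^{-m}e^{-\beta\tau} \), which forces you to sharpen the derivative estimate to \( |\partial_v^k\psi_n| \lesssim (n+1)^k \); that sharpening is legitimate (it follows immediately from the explicit sine form of \(\psi_n\), since the trigonometric factor is bounded by one), and you correctly identify that without it the majorant is harmonically divergent exactly at the borderline \( m = k+2 \). What each approach buys: yours is more elementary, gives absolute uniform convergence and the \(C^k\) exponential bound without ever invoking the weighted-\(\ell^2\) assumption, and cleanly handles the interchange of sum and derivative; the paper's Cauchy--Schwarz route tolerates the non-sharp \((n+1)^{k+1}\) bound and expresses the decay through the weighted \(\ell^2\) (Sobolev-type) norm of the deviations, though as written its claim that this first factor decays like \(e^{-2\beta\tau}\) leans on combining both hypotheses rather than on the pointwise bound alone --- an issue your argument sidesteps. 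Your closing remark about ``strengthening'' the hypotheses to reach \(C^\infty\) shares the same looseness as the lemma's own phrasing (a single \(m\) tied to a single \(k\)), so it is not a defect relative to the paper.
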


\begin{proof}
Each eigenfunction \( \psi_n \in C^\infty([-v_c, v_c]) \) satisfies the bound
\begin{equation}
|\psi_n^{(k)}(v)| \le C_k\, n^{k+1} \quad \text{for all } v \in [-v_c, v_c].
\end{equation}
Then by Cauchy-Schwarz, we estimate:
\begin{align*}
\left| \partial_v^k C(v,\tau) \right|
&= \left| \sum_{n=0}^\infty (C_n(\tau) - \pi)\, \psi_n^{(k)}(v) \right| \\
&\le \left( \sum_{n=0}^\infty |C_n(\tau) - \pi|^2 (1+n)^{2m} \right)^{1/2} \\
&\quad \times \left( \sum_{n=0}^\infty |\psi_n^{(k)}(v)|^2 (1+n)^{-2m} \right)^{1/2}.
\end{align*}

The second sum is finite provided \( m > k + \tfrac{3}{2} \), and the first sum decays like \( e^{-2\beta \tau} \). Hence,
\begin{equation}
\bigl\| \partial_v^k C(\cdot,\tau) \bigr\|_{L^\infty} \le D_k\, e^{-\beta \tau}
\end{equation}
for some constant \( D_k > 0 \), and therefore \( C(v,\tau) \to \pi \) in \( C^k \), for all \( k \).
\end{proof}

\begin{remark}[Spectral Regularity Condition]
The condition on initial data is equivalent to assuming \( C(v,0) \in H^s([-v_c,v_c]) \) for some \( s > k + \tfrac{5}{2} \), since
\begin{equation}
\|C\|_{H^s}^2 \sim \sum_{n=0}^\infty (1+n)^{2s} |C_n|^2.
\end{equation}
Thus, smooth initial data yields convergence in all Sobolev norms and hence in \( C^\infty \).
\end{remark}

\begin{remark}[Stability of the Symmetric Configuration]
The constant configuration \( C(v) \equiv \pi \) acts as a global attractor under the flow. The exponential decay of all derivatives confirms stabilization in the \( C^\infty \) topology.
\end{remark}

\subsection{Special Case: Dimension \( d = 4 \)}

The four-dimensional case is structurally distinguished by the potential existence of exotic smooth structures on topological 4-spheres -- a phenomenon unique to dimension four. While the topological classification of closed simply-connected 4-manifolds is fully resolved (cf.~\cite{freedman1982}), the classification of their smooth structures remains one of the most subtle problems in geometry.

In the present framework, the four-dimensional conclusion must be stated more carefully. The flow selects the spherical asymptotic profile in mode space, while the geometric conclusion requires coincidence of the relevant invariant package with that of the round sphere. The smooth Poincare problem in dimension four remains open. The correct unconditional conclusion is therefore topological, together with a spectral obstruction statement.

\begin{theorem}[Spectral Rigidity in \( d = 4 \)]
Let \( M^4 \) be a closed, smooth, simply-connected 4-manifold whose deformation spectral invariants coincide with those of the standard sphere \( S^4 \). Then \( M^4 \cong_{\mathrm{top}} S^4 \). Moreover, any exotic smooth structure on \( S^4 \) that produces different values for at least one invariant in the present deformation package is spectrally obstructed within the operator domain of \( \hat{C} \).
\end{theorem}

\begin{proof}
By hypothesis, the deformation spectral invariant package of \( M^4 \) coincides with that of \( S^4 \). In particular, the topological data extracted from this package agree with those of the standard 4-sphere. By Freedman's theorem~\cite{freedman1982}, this yields \( M^4 \cong_{\mathrm{top}} S^4 \).

For the smooth category, the strongest justified statement is conditional: any exotic smooth structure on \( S^4 \) that changes at least one invariant in the deformation package is excluded within the present framework. This is a spectral obstruction statement, not a resolution of the smooth Poincare conjecture.
\end{proof}

\begin{theorem}[Invariant-Based Rigidity in \( d = 4 \)]
\label{thm:spectral-classification-d4}
Let \( M^4 \) be a smooth, closed, simply-connected 4-manifold. Suppose that the following spectral invariants of the deformation operator \( \hat{C} \) coincide with those of the standard sphere \( S^4 \):
\begin{align}
a_j(M^4) &= a_j(S^4) \quad \forall j \ge 0, \nonumber \\
\eta(M^4) &= \eta(S^4), \\
\mathcal{T}(M^4) &= \mathcal{T}(S^4). \nonumber
\end{align}
Then \( M^4 \cong_{\mathrm{top}} S^4 \). The smooth Poincare conjecture in dimension four remains open; within the present framework, the saturation of all spectral invariants provides a spectral obstruction against exotic smooth structures, but does not unconditionally resolve the smooth classification.
\end{theorem}

\begin{proof}
The deformation operator \( \hat{C} \) admits a heat trace asymptotic expansion as \( t \to 0^+ \) of the form
\begin{equation}
\operatorname{Tr}(e^{-t(\pi - \hat{C})}) \sim \sum_{j=0}^\infty a_j t^{(j-4)/2},
\end{equation}
where \( a_j \) are the Weyl heat coefficients that capture local geometric data such as curvature invariants.

In addition to the local asymptotics, the operator \( \hat{C} \) also defines two global spectral invariants:
\begin{align}
\eta_{\hat{C}}(s) &= \sum_{C_n \ne 0} \operatorname{sign}(C_n) |C_n|^{-s}, \\
\log \mathcal{T} &= -\frac{1}{2} \sum_{p=0}^4 (-1)^p p \, \zeta_p'(0).
\end{align}
which correspond to the eta-invariant and analytic torsion respectively (cf.~\cite{APS3}, \cite{ray-singer1971}).

By assumption, all three sets of invariants --- the full Weyl sequence \( \{a_j\} \), the eta-invariant \( \eta \), and the torsion \( \mathcal{T} \) --- are identical for \( M^4 \) and \( S^4 \). This implies that the entire spectral profile of \( \hat{C} \), including local and global information, agrees:
\begin{equation}
\{C_n^{(M^4)}\} = \{C_n^{(S^4)}\}.
\end{equation}

By assumption, all three sets of invariants agree with those of \( S^4 \). In particular, the topological data encoded by the deformation package agree with those of the standard 4-sphere. Freedman's theorem therefore yields \( M^4 \cong_{\mathrm{top}} S^4 \).

The stronger statement \( M^4 \cong_{\mathrm{diff}} S^4 \) would require a resolution of the smooth Poincare conjecture or an additional theorem proving that no exotic smooth \( S^4 \) can share the same deformation invariant package. That conclusion is not claimed here.
\end{proof}

\begin{lemma}[Spectral Obstruction Against Exotic \( S^4 \)]
\label{lem:no-exotic-S4}
Let \( M^4 \) be a closed, smooth, simply-connected 4-manifold whose deformation spectral invariant package coincides with that of the standard \( S^4 \), and whose renormalized representative \( \widetilde C^{(q)}(M^4;S^4) \) is compatible with the spherical asymptotic profile selected by the flow. Then any exotic smooth structure on \( S^4 \) that produces different values for at least one invariant in this package is spectrally obstructed.
\end{lemma}
\begin{proof}
This is precisely the four-dimensional obstruction statement encoded in Theorem~\ref{thm:spectral-classification-d4}. Once the deformation invariant package is assumed to agree with that of the round sphere, any exotic smoothing that changes at least one element of that package is excluded within the present framework. The compatibility of the renormalized representative with the spherical asymptotic profile identifies the relevant dynamical endpoint, but the obstruction itself is carried by the invariant package rather than by modewise convergence alone.
\end{proof}
\begin{corollary}[Spectral Characterization in Dimension \( 4 \)]
Let \( M^4 \) be a smooth, simply-connected, closed 4-manifold whose deformation spectral invariant package coincides with that of the standard sphere and whose renormalized representative is compatible with the spherical asymptotic profile selected by the flow. Then \( M^4 \cong_{\mathrm{top}} S^4 \), and no smooth structure on \( S^4 \) that differs from the standard one in the deformation spectral invariants can arise as the asymptotic realization of the framework.
\end{corollary}
\begin{remark}
This conclusion does not follow from convergence alone. The dynamical flow selects the spherical asymptotic profile, while the topological conclusion and the obstruction to exotic smoothings require coincidence of the deformation invariant package.
\end{remark}

\subsection{Higher-Dimensional Rigidity for \( d \geq 5 \)}

In dimensions \( d \geq 5 \), the classification of smooth, compact, simply-connected manifolds is governed by the \( h \)-cobordism theorem of Smale~\cite{smale1961}, while the possible exotic sphere classes are encoded by the finite group \( \Theta_d \) of Kervaire and Milnor~\cite{kervaire-milnor1963}. Our goal here is to show that, within the deformation-spectral framework and under coincidence of the invariant package, no nontrivial exotic sphere class is compatible with the spherical asymptotic profile.

\begin{theorem}[Smooth Rigidity in Higher Dimensions]
\label{thm:high-d-rigidity}
Let \( M^d \) be a smooth, compact, simply-connected manifold of dimension \( d \geq 5 \), equipped with raw deformation spectrum \( \{C_n(M,g)\} \) and renormalized representative \( \widetilde C^{(q)}(M,g;S^d) \). Assume that the deformation spectral invariant package coincides with that of the round sphere, and that the renormalized representative is compatible with the spherical asymptotic profile selected by the flow. Then \( M^d \) belongs to the spherical rigidity class within the present deformation-invariant framework; in particular, classical high-dimensional topology yields \( M^d \cong_{\mathrm{diff}} S^d \).
\end{theorem}

\begin{proof}
By hypothesis, the deformation invariant package of \( M^d \) agrees with that of the round sphere, and the renormalized representative is compatible with the spherical asymptotic profile selected by the flow. The relevant Weyl coefficients and global invariants therefore place \( M^d \) in the same rigidity class as \( S^d \) inside the present framework.

By the topological Poincare conjecture in dimensions \( d \geq 5 \), proven by Smale~\cite{smale1961}, such a manifold is homeomorphic to \( S^d \). Smoothly, the manifold may differ only by an element of the finite group \( \Theta_d \), classified by Kervaire and Milnor~\cite{kervaire-milnor1963}. The elements of \( \Theta_d \) are distinguished by spectral invariants such as the signature defect and the \( \eta \)-invariant. Since these agree by hypothesis within the present deformation-invariant framework, \( M^d \) must represent the trivial class in \( \Theta_d \); consequently, \( M^d \cong_{\mathrm{diff}} S^d \).
\end{proof}

\begin{remark}[Spectral Elimination of Exotic Structures]
The assumption of full invariant coincidence with the round sphere is strong enough to eliminate smooth anomalies within the present deformation-invariant framework. The distinguishing features of exotic spheres, including orientation defects, torsion in intersection forms, and framing anomalies, are encoded in the invariant package rather than produced automatically by modewise convergence.
\end{remark}

\section{Encoded Spectral Constraints and Rigidity}
\label{sec:exotic-exclusion}

The preceding sections have established that the spectral flow selects a spherical asymptotic profile in mode space and that geometric conclusions require a separate invariant package. To constrain exotic smooth structures or non-isometric isospectral manifolds within the present framework, one must therefore examine the asymptotic regime \( n \to \infty \), where the raw encoded spectrum carries the geometric and topological information relevant to the rigidity hypotheses.

In this section, we use the high-frequency behavior of the encoded spectrum to motivate the invariant package employed in the rigidity statements. These quantities, accessible through the short-time expansion of the heat trace, can distinguish between smooth structures even on topologically identical manifolds. Accordingly, coincidence of the relevant asymptotic spectral data is treated as part of the geometric rigidity hypothesis rather than as an automatic consequence of modewise convergence alone.

\subsection{Spectral Density and Dimensional Determination}

We recall that the eigenvalues \( \{C_n\} \) of the self-adjoint operator \( \hat{C} \), as constructed and analyzed in~\cite{alexa2025-spectrum}, form a strictly \emph{decreasing} sequence bounded above by \( \pi \) and unbounded below:
\begin{equation}
C_{n+1}<C_n<\pi, 
\qquad \lim_{n\to\infty} C_n = -\infty.
\end{equation}
This spectral structure justifies the use of inversion and derivative-based arguments.

We define the spectral counting function
\begin{equation}
n(C) := \max\{n \in \mathbb{N} \mid C_n \ge C\},
\end{equation}
which is well-defined due to the proven monotonic decay of the spectrum.

\begin{theorem}[Spectral Density Determines Dimension]
Let \( (M^d,g) \) be a compact Riemannian manifold with Laplace--Beltrami eigenvalues \( \lambda_n(-\Delta_g) \), and define the raw encoded spectrum
\begin{equation}
C_n^{\mathrm{enc}}(M,g) := \pi - \epsilon \lambda_n(-\Delta_g), \qquad \epsilon>0.
\end{equation}
Let
\begin{equation}
N_{\mathrm{enc}}(C) := \#\{n : C_n^{\mathrm{enc}} \ge C\}, \qquad
\rho_{\mathrm{enc}}(C) := -\frac{d}{dC}N_{\mathrm{enc}}(C)
\end{equation}
in the distributional sense. If Weyl asymptotics holds on \( M^d \), then in the bulk encoded regime \( C\to-\infty \),
\begin{equation}
N_{\mathrm{enc}}(C) \sim A_d (\pi - C)^{d/2},
\qquad
\rho_{\mathrm{enc}}(C) \sim \frac{d}{2} A_d (\pi - C)^{(d-2)/2},
\end{equation}
for a positive constant \( A_d \) depending on \( d \), \( \epsilon \), and the Weyl constant of \( (M^d,g) \). Hence, within the class of affinely encoded Weyl-compatible spectra, the exponent of \( \rho_{\mathrm{enc}} \) determines the geometric dimension \( d \).
\end{theorem}

\begin{proof}
By Weyl's law,
\begin{equation}
\lambda_n(-\Delta_g) \sim \gamma_d(g)\, n^{2/d},
\qquad n\to\infty.
\end{equation}
Substituting into the affine encoding gives
\begin{equation}
C_n^{\mathrm{enc}} \sim \pi - \epsilon \gamma_d(g)\, n^{2/d}.
\end{equation}
Equivalently,
\begin{equation}
N_{\mathrm{enc}}(C) = \#\left\{n : \lambda_n(-\Delta_g) \le \frac{\pi-C}{\epsilon}\right\}
\sim A_d (\pi-C)^{d/2}
\end{equation}
as \( C\to-\infty \), where \( A_d = \gamma_d(g)\epsilon^{-d/2} \) up to the standard Weyl normalization. Differentiating in the Stieltjes sense yields the stated asymptotic for \( \rho_{\mathrm{enc}} \). The exponent \( (d-2)/2 \) therefore encodes \( d \).
\end{proof}

\begin{theorem}[Encoded Spectral Emergence of Dimension]
Let \( \{C_n^{\mathrm{enc}}\}_{n \in \mathbb{N}} \) denote a monotone affinely encoded manifold spectrum,
\begin{equation}
C_n^{\mathrm{enc}} := \pi - \epsilon \lambda_n(-\Delta_g),
\end{equation}
with \( C_n^{\mathrm{enc}} < \pi \) for all \( n \), and suppose that the bulk asymptotics takes the form
\begin{equation}
C_n^{\mathrm{enc}} \sim \pi - \kappa n^{2/d}, \qquad n \to \infty,
\end{equation}
for some constant \( \kappa > 0 \) and parameter \( d > 0 \). Then the associated encoded density
\begin{equation}
\rho_{\mathrm{enc}}(C) := \frac{dn}{dC}
\end{equation}
satisfies the asymptotic law
\begin{equation}
\rho_{\mathrm{enc}}(C) \sim (\pi - C)^{(d - 2)/2}, \qquad C \to -\infty,
\end{equation}
and, within the class of Weyl-compatible encoded spectra, the exponent of this bulk decay determines the effective geometric dimension \( d \). Conversely, any monotone encoded spectrum with such density behavior satisfies
\begin{equation}
C_n^{\mathrm{enc}} \sim \pi - \widetilde{\kappa}\, n^{2/d}
\end{equation}
for some \( \widetilde{\kappa} > 0 \).
\end{theorem}

\begin{proof}
Assume \( C_n^{\mathrm{enc}} \sim \pi - \kappa n^{2/d} \) as \( n \to \infty \). Then inversion yields
\begin{equation}
N_{\mathrm{enc}}(C) := \#\{n : C_n^{\mathrm{enc}} \ge C\}
\sim \left( \frac{\pi - C}{\kappa} \right)^{d/2},
\end{equation}
as \( C \to -\infty \). Differentiating in the Stieltjes sense, we obtain
\begin{equation}
\rho_{\mathrm{enc}}(C) \sim \frac{d}{2 \kappa^{d/2}} (\pi - C)^{(d - 2)/2},
\end{equation}
as \( C \to -\infty \). This shows that the leading-order decay of \( \rho_{\mathrm{enc}} \) in the bulk encoded regime encodes the ambient dimension \( d \) via the exponent \( \frac{d - 2}{2} \).

Conversely, if the encoded density satisfies \( \rho_{\mathrm{enc}}(C) \sim (\pi - C)^{(d - 2)/2} \), then integration yields
\begin{equation}
N_{\mathrm{enc}}(C) \sim (\pi - C)^{d/2},
\end{equation}

Standard Tauberian inversion yields:
\begin{equation}
C_n^{\mathrm{enc}} \sim \pi - \widetilde{\kappa}\, n^{2/d},
\end{equation}
for some constant \( \widetilde{\kappa} > 0 \).

\end{proof}

\begin{remark}[Bulk Regime versus Dynamical Limit]
For \( d = 1 \), the encoded density satisfies \( \rho_{\mathrm{enc}}(C) \sim (\pi - C)^{-1/2} \) as \( C \to -\infty \), so the divergence occurs in the bulk encoded regime rather than near the geometric edge \( C \uparrow \pi \). This is separate from the dynamical statement \( C_n(\tau) \to \pi \), which concerns relaxation in mode space.
\end{remark}

The affine manifold encoding
\begin{equation}
C_n^{\mathrm{enc}}(M,g) := \pi - \epsilon \lambda_n(-\Delta_g)
\end{equation}
preserves spectral ordering and transfers Weyl asymptotics from the Laplace spectrum to the encoded \( C \)-variable. This encoded spectrum is the object that carries geometric bulk information in the present framework.

\begin{proposition}[Affine Encoded Weyl Law]
Let \( (M^d,g) \) be a compact Riemannian manifold and let
\begin{equation}
C_n^{\mathrm{enc}}(M,g) := \pi - \epsilon \lambda_n(-\Delta_g),
\qquad \epsilon > 0.
\end{equation}
If
\begin{equation}
\lambda_n(-\Delta_g) \sim \gamma_d(g)\, n^{2/d},
\qquad n \to \infty,
\end{equation}
then
\begin{equation}
C_n^{\mathrm{enc}}(M,g) \sim \pi - \epsilon \gamma_d(g)\, n^{2/d},
\end{equation}
and, in the bulk encoded regime \( C \to -\infty \),
\begin{equation}
\begin{aligned}
N_{\mathrm{enc}}(C) &\sim \gamma_d(g)\,\epsilon^{-d/2}(\pi - C)^{d/2}, \\
\rho_{\mathrm{enc}}(C) &\sim \frac{d}{2}\gamma_d(g)\,\epsilon^{-d/2}(\pi - C)^{(d-2)/2}.
\end{aligned}
\end{equation}
\end{proposition}

\begin{proof}
The asymptotic formula for \( C_n^{\mathrm{enc}} \) follows immediately from the affine relation
\begin{equation}
C_n^{\mathrm{enc}}(M,g) = \pi - \epsilon \lambda_n(-\Delta_g)
\end{equation}
and Weyl's law for \( \lambda_n(-\Delta_g) \). Monotonicity of the encoding yields
\begin{equation}
N_{\mathrm{enc}}(C)
= \#\left\{n : \lambda_n(-\Delta_g) \le \frac{\pi-C}{\epsilon}\right\},
\end{equation}
so the counting and density asymptotics follow by substitution and Stieltjes differentiation.
\end{proof}

\begin{remark}[Role of the Deformation Operator]
The bulk Weyl laws above refer to the raw encoded manifold spectrum \( \{C_n^{\mathrm{enc}}\} \), not to the intrinsic spectrum of the fixed interval operator \( \hat{C} \). The latter has its own one-dimensional asymptotics
\begin{equation}
C_n^{\mathrm{op}} \sim \pi - \kappa_{\mathrm{op}}(n+1)^2,
\end{equation}
and its role in the present paper is to supply the fixed eigenbasis and mode space for the deformation flow. By contrast, dimension recovery and Weyl-type rigidity statements are extracted from the encoded manifold data \( \pi - \epsilon \lambda_n(-\Delta_g) \) in the bulk regime \( C \to -\infty \).
\end{remark}

\begin{corollary}[Dimensional Spectral Signature]
Within the class of affine encoded manifold spectra, the bulk density exponent
\begin{equation}
\rho_{\mathrm{enc}}(C) \sim (\pi - C)^{(d-2)/2},
\qquad C \to -\infty,
\end{equation}
uniquely determines the geometric dimension \( d \). Thus the dimensional signature is carried by the encoded Weyl asymptotics, while the fixed operator \( \hat{C} \) provides the dynamical spectral parameter space in which the flow is formulated.
\end{corollary}

The asymptotic form \( \rho_{\mathrm{enc}}(C) \sim (\pi - C)^{(d-2)/2} \) directly determines the geometric dimension \( d \) from the bulk encoded spectrum. This provides a bridge between encoded spectral counting and the heat-trace discussion in the next subsection.

\subsection{Weyl Asymptotics and High-Energy Constraints}
\label{subsec:weyl-asymptotics}

For a compact Riemannian manifold \( (M^d,g) \), the affine encoded Laplace package may be represented through the shifted operator
\begin{equation}
\hat{C}_{\mathrm{enc}} := \pi - \epsilon(-\Delta_g),
\end{equation}
whose heat trace satisfies
\begin{equation}
\operatorname{Tr} \left( e^{-t(\pi - \hat{C}_{\mathrm{enc}})} \right) \sim \sum_{j=0}^\infty a_j \, t^{(j - d)/2}, \qquad t \to 0^+,
\label{eq:heat-expansion}
\end{equation}
where the coefficients \( a_j \) are geometric Weyl invariants: \( a_0 \) encodes the volume, \( a_1 \) the scalar curvature, \( a_2 \) the Ricci tensor norm, and so on. The full asymptotic structure of the spectrum is reflected in these coefficients.

In our setting, the relevant bulk asymptotics is carried by the encoded manifold spectrum
\begin{equation}
C_n^{\mathrm{enc}}(M,g) = \pi - \epsilon \lambda_n(-\Delta_g) \sim \pi - \kappa n^{2/d}, \qquad n \to \infty,
\label{eq:weyl-cn}
\end{equation}
for some \( \kappa > 0 \), by Weyl's law.

Because the shifted encoded trace can be written as
\begin{equation}
\operatorname{Tr} \left( e^{-t(\pi - \hat{C}_{\mathrm{enc}})} \right) = \sum_{n=0}^\infty e^{-t(\pi - C_n^{\mathrm{enc}})},
\end{equation}
the heat coefficients \( a_j \) can be recovered from the moments of the encoded sequence \( \{C_n^{\mathrm{enc}}\} \) via Mellin inversion. Consequently, any deviation of the bulk encoded spectrum from that of the round sphere manifests as a mismatch in some \( a_j \).

We thus obtain the following rigidity criterion:

\begin{proposition}[Spectral Constraint from Encoded High Modes]
\label{prop:weyl-asymptotics}
Let \( M^d \) be a compact smooth manifold with encoded spectrum \( \{C_n^{\mathrm{enc}}(M)\} \), and assume that the bulk encoded asymptotics is compatible with the spherical profile. Assume also that the corresponding encoded heat trace admits an expansion of the form
\begin{equation}
\operatorname{Tr}\left( e^{-t(\pi - \hat{C}_{\mathrm{enc}})} \right) \sim \sum_{j=0}^\infty a_j t^{(j-d)/2}.
\end{equation}
Then coincidence of the high-frequency deformation asymptotics is compatible only with coincidence of the corresponding Weyl coefficients. Accordingly, equality of the Weyl package is imposed as part of the rigidity hypothesis used in the classification statements below.
\end{proposition}

\begin{proof}
Let \( C_n^{(M)} \) and \( C_n^{(S)} \) denote the encoded eigenvalue sequences associated with \( M^d \) and the round sphere \( S^d \), respectively. Define the difference \( \Delta_n := C_n^{(M)} - C_n^{(S)} \).

By assumption, the flow selects the spherical asymptotic profile in mode space, while the manifold input is measured through the encoded bulk asymptotics. Since the round encoded spectrum satisfies \( C_n^{(S)} = \pi - \kappa n^{2/d} + o(n^{2/d}) \) as \( n \to \infty \), any nontrivial deviation in the encoded high-frequency tail on \( M^d \) appears as \( \Delta_n \not\to 0 \).

Suppose, toward a contradiction, that there exists \( j \in \mathbb{N}_0 \) such that \( a_j^{(M)} \ne a_j^{(S)} \), and let \( j_0 \) be the smallest such index. Then the difference of heat traces satisfies
\begin{align}
\operatorname{Tr}\bigl(e^{-t(\pi - \hat{C}_{\mathrm{enc},M})}\bigr) 
- \operatorname{Tr}\bigl(e^{-t(\pi - \hat{C}_{\mathrm{enc},S})}\bigr)
&\sim (a_{j_0}^{(M)} - a_{j_0}^{(S)})\, t^{(j_0 - d)/2}, \\
&\text{as } t \to 0^+.
\end{align}
By a classical Tauberian theorem (see, e.g., Gilkey~\cite[Thm.~1.7.7]{gilkey1984}), this implies
\begin{equation}
\Delta_n := C_n^{(M)} - C_n^{(S)} \asymp n^{(2j_0 - d)/d}, \quad \text{as } n \to \infty.
\end{equation}
This shows that any rigidity statement identifying the spherical asymptotic profile with a manifold realization must also require coincidence of the corresponding encoded Weyl package. In the present paper, that coincidence is therefore treated as part of the geometric hypothesis rather than as a consequence of modewise flow convergence alone.
\end{proof}

\begin{remark}[Spectral Saturation and Rigidity]
The exponential stabilization of the spectrum identifies the spherical target profile in mode space. The corresponding geometric rigidity statements require, in addition, coincidence of the Weyl package and the remaining deformation invariants with those of the round sphere.
\end{remark}

\subsection{Spectral Obstructions: \( \eta \)-Invariant and Torsion}
\label{subsec:eta-torsion}

Beyond Weyl asymptotics, the spectral flow also controls finer invariants that distinguish smooth structures: the spectral asymmetry measured by the $\eta$-invariant, and the analytic torsion.

The $\eta$-function of a self-adjoint elliptic operator $D$ with discrete spectrum $\{\lambda_n\}$ is defined as
\begin{equation}
\eta_D(s) := \sum_{\lambda_n \ne 0} \operatorname{sign}(\lambda_n)\,|\lambda_n|^{-s}, \qquad \Re(s) > d/2.
\end{equation}
This function admits meromorphic continuation to the complex $s$-plane and is regular at $s=0$. Its value $\eta_D(0)$ captures the global spectral asymmetry of the operator. On the round sphere, the symmetry of the spectrum under $\lambda \leftrightarrow -\lambda$ implies $\eta_{\hat C}(0)=0$; see~\cite[Prop.~5.3, Ch.~III]{gilkey1984} or \cite{APS1}.

Within the rigidity framework, equality of the relevant global deformation invariants includes the corresponding spectral asymmetry data; accordingly, one imposes:
\begin{align}
\eta_{\hat{C}_M}(s) &= \eta_{\hat{C}_{S^d}}(s) \quad \text{as meromorphic functions} \notag \\
&\Rightarrow \quad \eta_{\hat{C}_M}(0) = 0.
\end{align}

Similarly, the analytic torsion is defined via the zeta-regularized determinant of Laplace-type operators acting on $p$-forms:
\begin{equation}
\log \mathcal{T} := -\frac{1}{2} \sum_{p=0}^{d} (-1)^p\, p \cdot \zeta_p'(0), 
\end{equation}
The sign convention follows the original Ray-Singer normalization (see \cite{ray-singer1971})

where $\zeta_p(s)$ is the spectral zeta function of the Laplacian on $\Omega^p(M)$. On spheres, the analytic torsion is trivial: $\mathcal{T} = 1$~\cite{ray-singer1971}.

\begin{proposition}[Spectral invariants and smooth structure]
Let $M^d$ be a compact, smooth manifold whose deformation spectral invariant package coincides with that of the round sphere. Then the $\eta$-invariant and analytic torsion appearing in that package coincide with those of $S^d$.
\end{proposition}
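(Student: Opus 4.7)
The plan is to leverage the exponential stabilization $C_n(\tau) \to \pi$, which together with the preceding propositions and the Spectral Rigidity Theorem of Section~II forces the full signed spectrum of $\hat{C}$ on $M^d$ to coincide asymptotically with that on the round sphere. Once this identification is established at the level of signed eigenvalues with multiplicities, the equality of the two invariants follows essentially from uniqueness of meromorphic continuation of the defining Dirichlet series, together with the coincidence of the local geometric data encoded in the Weyl coefficients.

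For the $\eta$-invariant, the first step is to note that the defining series $\eta_{\hat{C}_M}(s)$ and $\eta_{\hat{C}_{S^d}}(s)$ converge absolutely in the half-plane $\Re(s) > d/2$, thanks to the Weyl tail $|C_n| \asymp n^{2/d}$. Exponential convergence $C_n^{(M)}(\tau) \to C_n^{(S^d)}$ for every $n$, combined with a dominated-convergence argument using that Weyl tail as majorant, ensures that the two series agree termwise in this half-plane. Both admit meromorphic continuation to $\mathbb{C}$ that is regular at $s=0$ by the Atiyah-Patodi-Singer theory, and uniqueness of analytic continuation then forces agreement everywhere. Evaluating at the origin and invoking the signed-spectrum symmetry of $\hat{C}$ on the round sphere yields $\eta_{\hat{C}_M}(0) = \eta_{\hat{C}_{S^d}}(0) = 0$.

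For the analytic torsion, I would promote the spectral coincidence from $\hat{C}$ to the full Hodge complex $\{\Delta_p\}_{p=0}^d$ on $p$-forms. The full matching of the Weyl heat coefficients $a_j$ established in Section~VI.B, combined with the $C^\infty$ convergence of the deformation profile (Lemma~\ref{lem:spectral-stabilization}) and the diffeomorphism $M^d \cong_{\mathrm{diff}} S^d$ (Theorems~\ref{thm:spectral-classification}, \ref{thm:spectral-classification-d4}, and \ref{thm:high-d-rigidity}), implies that the limiting Riemannian structure of $M^d$ is isometric to the round sphere. Consequently $\operatorname{Spec}(\Delta_p^{(M)}) = \operatorname{Spec}(\Delta_p^{(S^d)})$ agree for every degree $0 \le p \le d$, so the zeta functions satisfy $\zeta_p^{(M)}(s) = \zeta_p^{(S^d)}(s)$ and $\zeta_p'(0)|_{M} = \zeta_p'(0)|_{S^d}$. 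The signed combination then gives $\log \mathcal{T}(M^d) = \log \mathcal{T}(S^d) = 0$ by the Ray-Singer calculation on spheres.

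The main obstacle is the transition from spectral data of the one-dimensional encoding operator $\hat{C}$, which acts on $L^2([-v_c, v_c])$, to the intrinsically $d$-dimensional Hodge spectra required for torsion. The cleanest resolution uses the chain of implications already established in the paper: exponential convergence yields $C^\infty$ stabilization of the deformation profile, which via the preceding classification theorems produces a smooth diffeomorphism $M^d \cong S^d$, under which the Hodge Laplacians of $M^d$ pull back isometrically to those of the round sphere. This avoids circularity because the diffeomorphism is obtained from Weyl and $\eta$-invariant data alone, prior to invoking torsion. A secondary technical point is to verify that the decay rate $\gamma > 0$ from Section~IV is uniform across modes, so that the spectrum of $\hat{C}$ saturates globally and not merely modewise.
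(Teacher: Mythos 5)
Your treatment of the $\eta$-invariant is essentially the paper's own argument: mode-wise exponential convergence of the signed eigenvalues gives agreement of the defining series in the half-plane of convergence, meromorphic continuation transfers this to all $s$, and evaluation at $s=0$ together with the symmetry of the round-sphere spectrum gives $\eta_{\hat{C}_M}(0)=0$. You merely spell out the dominated-convergence and Weyl-tail details that the paper leaves implicit; that half is fine.

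The torsion half is where you depart from the paper, and the departure introduces a genuine circularity. The paper's proof never leaves the operator $\hat{C}$: it simply asserts that the spectral coincidence extends to $\zeta_{\hat C}(s)$ and $\eta_{\hat C}(s)$ and reads off both $\eta(0)$ and $\mathcal{T}$ from there. You instead route through the classification theorems to get $M^d\cong_{\mathrm{diff}} S^d$ and then an isometry identifying the Hodge spectra $\operatorname{Spec}(\Delta_p)$. But those results sit downstream of this very proposition in the paper's architecture: the proof of Theorem~\ref{thm:global-rigidity} explicitly cites Subsection~\ref{subsec:eta-torsion} to obtain coincidence of $\eta(0)$ and $\mathcal{T}$; Theorem~\ref{thm:spectral-classification-d4} takes $\mathcal{T}(M^4)=\mathcal{T}(S^4)$ as a hypothesis; and Lemma~\ref{lem:no-exotic-S4} uses torsion among the invariants that exclude exotic structures. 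So precisely in the critical case $d=4$ your claim that circularity is avoided "because the diffeomorphism is obtained from Weyl and $\eta$-invariant data alone" does not hold: in the paper the $d=4$ smooth identification is obtained only after torsion equality is in hand. The one classification result available before this proposition, Theorem~\ref{thm:spectral-classification}, yields a diffeomorphism but not an isometry, and a diffeomorphism alone does not identify the $p$-form Laplacian spectra; the upgrade from $C^\infty$ convergence of the one-dimensional profile to an isometry of the $d$-dimensional metric is exactly what the later rigidity theorem supplies. You correctly spotted the real gap --- passing from the spectrum of the 1D encoding operator $\hat{C}$ to the full Hodge complex --- which the paper's own proof glosses over, but the repair has to stay at the level of $\hat{C}$'s spectral data (as the paper does, by assertion) or give an independent argument for the $\Delta_p$ spectra; importing the classification theorems at this point is not admissible.
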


\begin{proof}
This is simply the global part of the invariant coincidence hypothesis. Once the deformation spectral package is assumed to agree with that of the round sphere, the corresponding global invariants, including $\eta(0)$ and $\mathcal{T}$, agree as well.
\end{proof}

\begin{remark}[Spectral rigidity and global invariants]
The invariants $\eta(0)$ and $\log \mathcal{T}$ encode subtle aspects of the smooth structure. In the present framework they belong to the global deformation invariant package used in the rigidity statements; they are not taken to be automatic consequences of modewise convergence alone.
\end{remark}

\subsection{Rigidity Criteria for Smooth Structures}

We now synthesize the dynamical and spectral results into a global rigidity statement. The role of the flow is to identify the spherical asymptotic profile in mode space. The geometric conclusions of this subsection require, in addition, coincidence of the deformation invariant package with that of the round sphere. According to the Spectral Rigidity Theorem~\cite[Thm.~3]{alexa2025-spectrum}, the profile \( C_n \equiv \pi \) corresponds uniquely -- up to isometry -- to the standard sphere within the fixed operator framework.

Any modification of the coordinate domain \( v \mapsto v' \), or rescaling of the limiting value \( \pi \mapsto \pi' \), necessarily alters the domain \( D(\hat{C}) = H^2 \cap H^1_0([-v_c, v_c]) \), and thereby changes the spectrum. Thus, within the fixed operator domain and boundary conditions, the spherical asymptotic profile serves as the distinguished reference configuration for the rigidity statements below.

\begin{remark}[Uniqueness within Operator Domain]
The spectral rigidity result $C_n = \pi \ \forall n \ \Rightarrow \ S^d$ holds strictly within the fixed operator domain $D(\hat{C}) = H^2 \cap H^1_0([-v_c, v_c])$, with Dirichlet boundary conditions at $\pm v_c$. Any coordinate transformation $v \mapsto v'(v)$ or renormalization $\pi \mapsto \pi'$ modifies the interval or the operator structure and hence changes the domain $D(\hat C)$, thereby altering the spectrum. This follows from standard results in spectral theory (cf.~\cite[Thm.~1.3.5]{gilkey1984}), which show that boundary conditions determine the spectral type. The normalization $\pi$ is chosen once and for all because $\hat{C}$ depends linearly on this value and any variation introduces a new degree of freedom not supported by the fixed geometry.
\end{remark}

\begin{lemma}[Spectral Invariance under Coordinate Changes]
Let $v \mapsto v'(v)$ be a $C^1$-diffeomorphism such that $v'([-v_c, v_c]) \ne [-v_c, v_c]$. Then the transformed operator $\hat{C}'$ defined on $v'$ does not preserve the spectrum of $\hat{C}$. In particular, $\hat{C}'$ acts on a different domain $D(\hat{C}') \ne D(\hat{C})$.
\end{lemma}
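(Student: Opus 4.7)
The plan is to split the lemma into its two assertions and handle the easier domain mismatch first, then the spectral mismatch via the pulled-back operator. I would set $I := [-v_c, v_c]$ and $I' := v'(I)$ so that the hypothesis reads $I' \neq I$.

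For the domain claim, I would argue directly from function-space definitions: $D(\hat{C}) = H^2(I) \cap H^1_0(I)$ consists of Sobolev functions on $I$ vanishing at $\pm v_c$, while $D(\hat{C}')$ consists of Sobolev functions on $I'$ vanishing at $\partial I'$. Since $I$ and $I'$ are distinct subsets of $\mathbb{R}$, the ambient Hilbert spaces $L^2(I)$ and $L^2(I')$ are literally distinct, hence so are their Sobolev subspaces, which yields $D(\hat{C}') \neq D(\hat{C})$ immediately.

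For the spectral claim, I would compute the pull-back of $\hat{C}$ under $w = v'(v)$. Writing $J$ for the Jacobian $dv'/dv$ and applying the chain rule, the transformed operator takes the Sturm-Liouville form $\hat{C}' = \pi\bigl(1 + (\hbar^2/c^2)[J(w)^2 \partial_w^2 + J(w) J'(w) \partial_w]\bigr)$ on $H^2(I') \cap H^1_0(I')$. I would then split into cases. If $v'$ is non-affine, $J$ is non-constant, so $\hat{C}'$ has genuinely variable coefficients; classical Sturm-Liouville comparison (Zettl~2005) then rules out isospectrality with any constant-coefficient Dirichlet operator of the original form on the fixed interval. If $v'$ is affine with $J \equiv \alpha$, the hypothesis $I' \neq I$ forces either $\alpha \neq 1$ (different length), in which case the Dirichlet eigenvalues on $I'$ rescale as $(n+1)^2/|I'|^2$ and depart quantitatively from the spectrum $\{C_n\}$ derived in Section~II, or $\alpha = 1$ with a nontrivial translation, in which case the eigenvalue multiset happens to coincide with that of $\hat{C}$, but only via a unitary translation $L^2(I) \to L^2(I')$ that is external to the operator itself.

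The main obstacle is the pure-translation sub-case, where the two spectra literally agree as sets. I would address it by reading ``preserves the spectrum'' in the intrinsic sense of the preceding remark: the spectral rigidity program of Section~IV fixes the operator domain $D(\hat{C}) \subset L^2(I)$ as part of the defining data, so any operator acting on $L^2(I') \neq L^2(I)$ is, by construction, not the same spectral problem, even when its eigenvalue list coincides up to an extrinsic unitary intertwiner. This reading keeps the lemma consistent with the standing normalization of $\pm v_c$ and $\pi$ and with the domain-dependence criterion cited from~\cite{gilkey1984}.
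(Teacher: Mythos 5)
Your central spectral step fails. You define $\hat{C}'$ as the pull-back of $\hat{C}$ under $w=v'(v)$, i.e. $\hat{C}'=\pi\bigl(1+\tfrac{\hbar^2}{c^2}\bigl[J^2\partial_w^2+(\partial J)\,\partial_w\bigr]\bigr)$ with Dirichlet conditions on $I'=v'(I)$. But this pull-back is \emph{always} isospectral to $\hat{C}$: if $\hat{C}\psi_n=C_n\psi_n$ with $\psi_n(\pm v_c)=0$, then $\psi_n\circ (v')^{-1}$ satisfies the transformed equation with the same eigenvalue and vanishes on $\partial I'$, so every eigenvalue survives, whatever $v'$ is. Hence no Sturm--Liouville comparison theorem can ``rule out isospectrality'' in the non-affine case; comparison theorems order eigenvalues of distinct operators on a common interval and are simply inapplicable here. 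Your affine case shows the same confusion from the other side: with $J\equiv\alpha$ the pulled-back operator carries the factor $\alpha^2$ on $\partial_w^2$, which exactly cancels the $|I'|^{-2}$ rescaling of the Dirichlet eigenvalues, so the spectrum does not change; your claimed quantitative departure comes from silently replacing the pull-back by the original constant-coefficient expression on the new interval, i.e. a different operator from the one you defined. Finally, the pure-translation case you flag is not a technicality but a genuine counterexample to the spectral assertion under either reading of $\hat{C}'$, and your patch --- reinterpreting ``preserves the spectrum'' so that any operator on $L^2(I')\neq L^2(I)$ fails by fiat --- collapses the spectral claim into the trivial domain claim and therefore does not prove the statement as written.

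For comparison, the paper's own proof consists of the chain-rule identity plus two assertions: if the endpoints $\pm v_c$ are not preserved the Dirichlet domain changes, and even if they are preserved the factor $(dv'/dv)^2$ ``alters the scaling'' and hence the spectrum. It performs no affine/non-affine casework, does not confront the translation case, and its claim that the Jacobian factor shifts the spectrum is open to exactly the transported-eigenfunction objection above. So your write-up is more explicit about where the real difficulty sits, but neither your comparison-theorem step nor your reinterpretation closes it. A correct argument would first have to fix unambiguously what $\hat{C}'$ means (the same formal expression on $I'$ versus the pull-back of $\hat{C}$) and then strengthen or reformulate the hypothesis --- e.g. require $|v'(I)|\neq |I|$ for the formal-expression reading, or phrase the conclusion purely as the domain statement $D(\hat{C}')\neq D(\hat{C})$ --- since as stated the spectral assertion is false for translations and vacuous for pull-backs.
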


\begin{proof}
A coordinate change $v \mapsto v'$ modifies the second derivative operator by chain rule:
\begin{equation}
\frac{d^2}{dv^2} = \left( \frac{dv'}{dv} \right)^2 \frac{d^2}{dv'^2} + \text{lower-order terms}.
\end{equation}
Unless $v'$ preserves the boundary points $\pm v_c$, the Dirichlet conditions are violated and the operator domain changes. Even if $v'(\pm v_c) = \pm v_c$, the multiplicative factor $\left(\tfrac{dv'}{dv}\right)^2$ alters the scaling of the second derivative term, and thus modifies the spectrum, since the physical scale $\hbar^2/c^2$ is fixed.
\end{proof}

\begin{theorem}[Global Spectral Rigidity]
\label{thm:global-rigidity}
Let \( M^d \) be a closed, simply-connected smooth manifold whose deformation spectral invariants coincide with those of the round sphere within the fixed operator domain of \( \hat{C} \). Then \( M^d \) belongs to the spherical rigidity class inside the present framework. For \( d=3 \) and \( d\ge 5 \), classical results yield \( M^d \cong_{\mathrm{diff}} S^d \). For \( d=4 \), the strongest unconditional conclusion is \( M^4 \cong_{\mathrm{top}} S^4 \), together with a spectral obstruction against exotic smooth structures that produce distinct invariant values.
\end{theorem}

\begin{proof}
Assume that the deformation spectral invariant package of \( M^d \) coincides with that of the spherical asymptotic profile selected by the flow. This includes the Weyl coefficients \( a_j \), the spectral asymmetry \( \eta(0) \), and the analytic torsion \( \mathcal{T} \). 

By the Spectral Rigidity Theorem~\cite[Thm.~3]{alexa2025-spectrum}, such a spectrum places \( M^d \) in the spherical rigidity class associated with the round sphere inside the fixed operator framework. The dimension-dependent geometric conclusions then follow from the corresponding classical results recalled below.

For \( d \geq 5 \), classical results of Smale and Kervaire-Milnor confirm this conclusion via \( h \)-cobordism and exotic sphere classification. For \( d = 3 \), Perelman's geometrization yields the same.

For the critical case \( d = 4 \), Freedman's theorem yields the topological conclusion \( M^4 \cong_{\mathrm{top}} S^4 \). The deformation spectral invariants then provide a spectral obstruction against exotic smooth structures that produce distinct invariant values, but the stronger statement \( M^4 \cong_{\mathrm{diff}} S^4 \) is not claimed here.

Within the fixed operator framework, the geometric profile \( C(v) \) is reconstructible from the spectral decomposition \( C(v) = \sum_n C_n \psi_n(v) \) once the relevant invariant package has been fixed. Accordingly, the rigidity conclusion is a statement about the spherical invariant class inside the present deformation model, not a general claim that bare spectral data determine the metric structure of an arbitrary manifold.
\end{proof}

\begin{remark}[Scope of the Global Rigidity Statement]
The preceding theorem should be read as a rigidity criterion inside the deformation-spectral framework, not as a claim that the effective mode dynamics alone solves the manifold classification problem in full generality. In particular, the four-dimensional conclusion is topological plus obstructive, rather than a proof of smooth uniqueness.
\end{remark}

\begin{remark}[Spectral Completion and Rigidity Consequences]
The result should be read as a rigidity statement within the deformation-spectral framework: once the asymptotic spherical profile is combined with the full invariant package, the manifold-compatible realizations are strongly constrained.

In particular, for \( d = 4 \), the convergence \( C_n(\tau) \to \pi \) within the operator domain of \( \hat{C} \) yields a spectral obstruction statement rather than an unconditional smooth classification theorem.
\end{remark}

\begin{remark}[Correction of Four-Dimensional Scope]
The preceding discussion is to be interpreted in the qualified sense already stated earlier: in dimension four, the justified statement is a spectral obstruction against exotic smoothings that alter the deformation invariant package, not an unconditional proof of \( M^4 \cong_{\mathrm{diff}} S^4 \).
\end{remark}

\subsection{Variational Structure and Cubic Spectral Couplings}

The nonlinear spectral flow introduced in Eq.~\eqref{eq:nonlinear-flow} admits a variational interpretation. In particular, the cubic interaction coefficients \( g_{nkm} \) can be derived as effective couplings from an energy functional involving trilinear spectral overlaps. This establishes a geometric origin for the nonlinear terms in the evolution.

\begin{proposition}[Geometric Origin of Cubic Interactions]
The cubic interaction coefficients $g_{nkm}$ in Eq.~\eqref{eq:nonlinear-flow} can be derived as variational couplings from the cubic term in the effective energy functional
\begin{equation}
\mathcal{E}[C] = \sum_{n=0}^\infty \frac{\alpha_n}{2}(C_n - \pi)^2 + \lambda \sum_{n,k,m} \mathcal{T}_{nkm}\, C_n C_k C_m,
\end{equation}
where $\mathcal{T}_{nkm} := \int_{-v_c}^{v_c} \psi_n(v)\, \psi_k(v)\, \psi_m(v)\, dv$ is the triple spectral overlap integral.
This trilinear form is absolutely convergent by Sobolev estimates on $\psi_n$ in $H^s([-v_c, v_c])$ for $s > \tfrac{3}{2}$, and $\lambda$ is assumed sufficiently small to keep the cubic term subcritical in $\ell^2$ (cf. Lipschitz bounds).
\end{proposition}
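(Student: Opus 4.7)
The plan is to realize the nonlinear flow \eqref{eq:nonlinear-flow} as a literal gradient flow of the perturbed functional \(\mathcal{E}[C]\) by directly computing its functional derivative on the coordinate sequence \(\{C_n\}\), and then to read off \(g_{nkm}\) from the cubic contribution. The key observation is that the trilinear tensor \(\mathcal{T}_{nkm}\) is totally symmetric in \((n,k,m)\) because the integrand \(\psi_n\psi_k\psi_m\) is invariant under permutations, so differentiation of \(C_n C_k C_m\) with respect to \(C_n\) produces an unambiguous symmetric factor and no ordering ambiguity.

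First, I would decompose \(\mathcal{E}[C] = \mathcal{E}_{\mathrm{quad}}[C] + \lambda\, \mathcal{E}_{\mathrm{cub}}[C]\) with \(\mathcal{E}_{\mathrm{cub}}[C] = \sum_{n,k,m} \mathcal{T}_{nkm} C_n C_k C_m\), and compute the partial derivative
\begin{equation}
\frac{\partial \mathcal{E}_{\mathrm{cub}}}{\partial C_n} = 3 \sum_{k,m} \mathcal{T}_{nkm}\, C_k C_m,
\end{equation}
using the full symmetry of \(\mathcal{T}_{nkm}\). Combined with \(\partial \mathcal{E}_{\mathrm{quad}}/\partial C_n = \alpha_n(C_n - \pi)\), the gradient flow \(\dot C_n = -\partial \mathcal{E}/\partial C_n\) takes the form of \eqref{eq:nonlinear-flow} with the identification \(g_{nkm} = -3\lambda\, \mathcal{T}_{nkm}\). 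This step is essentially bookkeeping once the symmetry of \(\mathcal{T}_{nkm}\) and the validity of termwise differentiation are justified, and the latter follows from dominated convergence together with the absolute convergence of the trilinear sum established in the next step.

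Second, I would justify that \(\mathcal{E}_{\mathrm{cub}}\) is well-defined by a synthesis-analysis argument: for \(C(v) = \sum_n C_n \psi_n(v)\) one has formally
\begin{equation}
\sum_{n,k,m} \mathcal{T}_{nkm} C_n C_k C_m = \int_{-v_c}^{v_c} C(v)^3\, dv,
\end{equation}
and the right-hand side is finite provided \(C \in L^3([-v_c,v_c])\). In dimension one the Sobolev embedding \(H^s([-v_c,v_c]) \hookrightarrow L^\infty\) holds for every \(s > \tfrac{1}{2}\), so any profile with \(\{C_n\} \in \ell^2_{2s}\) for \(s > \tfrac{3}{2}\) (which is the regularity already imposed in the global convergence theorem for the nonlinear flow) lies in \(L^\infty\) and a fortiori in \(L^3\). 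Fubini plus the \(L^\infty\) bound on \(\psi_n\) (from the explicit sine form) then legitimizes the interchange of summation and integration and identifies the series with the integral above.

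Third, I would check coercivity and subcriticality for small \(\lambda\) by estimating \(|\mathcal{E}_{\mathrm{cub}}[C]| \le \|C\|_{L^3}^3 \le \|C\|_{L^\infty} \|C\|_{L^2}^2\) and absorbing this term into the quadratic part via Young's inequality, reproducing the bound already used in the proof of Theorem~2 on global convergence. Taking \(\lambda\) sufficiently small ensures the Lipschitz estimate on the cubic nonlinearity in \(\ell^2\) (and in the weighted space \(\ell^2_{2m}\)), so the Picard iteration argument applies verbatim.

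The hard part will be reconciling the natural decay of \(\mathcal{T}_{nkm}\) for the sine eigenbasis with the pointwise bound \(|g_{nkm}| \le C/(1 + n^p + k^p + m^p)\) with \(p > 2\) posited in \eqref{eq:interaction-decay}. Product-to-sum identities give selection rules on \(n \pm k \pm m\) and produce, when nonzero, a decay of order \(1/(n+k+m)\), which is weaker than \(p > 2\). I would handle this by noting that the Picard/energy arguments in Section~III do not in fact use the pointwise bound on \(g_{nkm}\) directly, but only the operator bound \(\|\mathcal{G}(C)\|_{\ell^2} \lesssim \|C\|_{\ell^2}^2\), which follows for the present \(\mathcal{T}_{nkm}\) from the \(L^2 \to L^2\) boundedness of pointwise multiplication by an \(L^\infty\) function combined with the Sobolev embedding above. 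Thus the variational origin of \(g_{nkm}\) is consistent with the dynamical hypotheses, and the apparent mismatch in the pointwise decay rate is resolved by working at the level of the trilinear form rather than its kernel.
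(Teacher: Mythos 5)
Your core derivation is the same as the paper's: vary the functional in the coordinates $C_n$ and read off $g_{nkm}\propto\mathcal{T}_{nkm}$. You are in fact more careful than the paper, which writes $\delta\mathcal{E}/\delta C_n=\alpha_n(C_n-\pi)+\lambda\sum_{k,m}\mathcal{T}_{nkm}C_kC_m$ and concludes $g_{nkm}=\lambda\mathcal{T}_{nkm}$, silently dropping the symmetrization factor $3$ and the sign forced by $\dot C_n=-\delta\mathcal{E}/\delta C_n$; your identification $g_{nkm}=-3\lambda\,\mathcal{T}_{nkm}$ is the correct bookkeeping for the functional as stated (the discrepancy is harmless only because it can be absorbed into $\lambda$, or into the $-\tfrac13$ convention used earlier in Section~III). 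Your synthesis identity $\sum_{n,k,m}\mathcal{T}_{nkm}C_nC_kC_m=\int_{-v_c}^{v_c}C(v)^3\,dv$ combined with the one-dimensional embedding $H^s\hookrightarrow L^\infty$ for $s>\tfrac12$ is also a cleaner justification of absolute convergence than the paper's bare appeal to ``Sobolev estimates on $\psi_n$.''

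The gap is in your final paragraph. The claim that $\|\mathcal{G}(C)\|_{\ell^2}\lesssim\|C\|_{\ell^2}^2$ holds for the actual sine-basis kernel, ``from the $L^2\to L^2$ boundedness of pointwise multiplication by an $L^\infty$ function combined with the Sobolev embedding,'' is false on plain $\ell^2$. Writing $u:=\sum_k C_k\psi_k$, one has $\mathcal{G}_n(C)=\langle\psi_n,u^2\rangle$, so by Parseval $\|\mathcal{G}(C)\|_{\ell^2}=\|u\|_{L^4}^2$, and there is no estimate $\|u\|_{L^4}\lesssim\|u\|_{L^2}$ on a bounded interval (tall narrow bumps give $\ell^2$-bounded coefficient sequences with arbitrarily large image norm). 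The multiplication-plus-embedding route costs a stronger norm: it yields only $\|\mathcal{G}(C)\|_{\ell^2}\lesssim\|C\|_{\ell^2_{2s}}\|C\|_{\ell^2}$ for $s>\tfrac12$, not $\|C\|_{\ell^2}^2$. Hence the genuine tension you correctly spot --- the true decay $|\mathcal{T}_{nkm}|\lesssim(\max(n,k,m)+1)^{-1}$ versus the hypothesis $p>2$ in \eqref{eq:interaction-decay} --- is not resolved by ``working at the level of the trilinear form'': the $\ell^2$ Lipschitz/subcriticality assertion of the Proposition still requires either retaining the stronger pointwise decay as a separate hypothesis (as the paper does) or rerunning the local well-posedness and convergence arguments entirely in the weighted space $\ell^2_{2m}$, $m>\tfrac32$, where the needed product estimate is available and which matches the regularity already assumed in the nonlinear convergence theorem. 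As written, that step of your argument would fail.
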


\begin{proof}
Varying $\mathcal{E}[C]$ with respect to $C_n$ yields:
\begin{equation}
\frac{\delta \mathcal{E}}{\delta C_n} = \alpha_n(C_n - \pi) + \lambda \sum_{k,m} \mathcal{T}_{nkm} C_k C_m,
\end{equation}
which gives the nonlinear flow equation \eqref{eq:nonlinear-flow} with $g_{nkm} = \lambda \mathcal{T}_{nkm}$.
\end{proof}

\subsubsection*{Analytic Structure and Asymptotic Decay of \( \mathcal{T}_{nkm} \)}

To justify the variational structure of the cubic flow, we analyze the triple spectral overlap integrals:
\begin{equation}
\mathcal{T}_{nkm} := \int_{-v_c}^{v_c} \psi_n(v)\, \psi_k(v)\, \psi_m(v)\, dv,
\end{equation}
with normalized eigenfunctions
\begin{equation}
\psi_n(v) = \sqrt{\frac{2}{\pi}} \sin\left( \frac{(n+1)\pi(v+v_c)}{2v_c} \right), \quad v \in [-v_c, v_c].
\end{equation}
Changing variable \( u = \frac{v + v_c}{2 v_c} \in [0,1] \), we obtain
\begin{align*}
\mathcal{T}_{nkm} &= \frac{2^{5/2} v_c}{\pi^{3/2}} \int_0^1 
\sin((n+1)\pi u)\, \sin((k+1)\pi u) \\
&\quad \times \sin((m+1)\pi u) \, du.
\end{align*}
Using the identity
\begin{align*}
\sin a \sin b \sin c = \frac{1}{4} \big[ 
&\sin(a + b - c) + \sin(a - b + c) \\
&+ \sin(-a + b + c) - \sin(a + b + c) \big].
\end{align*}
reduce the expression to the explicit formula
\begin{equation}
\mathcal{T}_{nkm} = \frac{2^{3/2} v_c}{\pi^{5/2}} \sum_{\substack{\sigma, \tau, \rho = \pm 1 \\ \omega = \sigma(n+1) + \tau(k+1) + \rho(m+1) \\ \omega\ \text{odd}}} \frac{-\sigma \tau \rho}{\omega},
\end{equation}
where the sum is taken only over the four sign combinations \((\sigma, \tau, \rho) \in \{(+,+,-), (+,-,+), (-,+,+), (+,+,+)\}\), which follow from the trigonometric identity. Equivalently, the sign weight \( \sigma \tau \rho \) selects precisely those terms with a single negative component or none, reproducing the correct analytic structure.

The integral is symmetric under permutations of indices and satisfies the sharp decay estimate
\begin{equation}
|\mathcal{T}_{nkm}| \leq \frac{2^{3/2} v_c}{\pi^{5/2} (\max(n,k,m)+1)},
\end{equation}
where \( \lesssim \) will later denote inequalities up to universal constants. This bound ensures convergence of the nonlinear term in \( \ell^2 \) and subcriticality of the cubic flow:
\begin{equation}
\sum_{k,m} |g_{nkm} \delta_k \delta_m| \leq \frac{\lambda C}{n+1} \|\delta\|_{\ell^2}^2.
\end{equation}

\begin{remark}[Numerical Validation]
For the physical value \( v_c = c \sqrt{1 - \tfrac{1}{\pi}} \approx 0.8257 \), direct computation yields
\begin{equation}
\mathcal{T}_{000} \approx 0.356, \quad \mathcal{T}_{111} \approx 0.178, \quad \mathcal{T}_{123} \approx 0.019,
\end{equation}
in agreement with the decay estimate \( |\mathcal{T}_{nkm}| \leq \frac{2^{3/2} v_c}{\pi^{5/2} (\max(n,k,m)+1)} \).
\end{remark}

\begin{remark}[Normalization Convention]
In this section we adopt a simplified normalization for the eigenfunctions: 
\begin{equation}
\psi_n(v) := \sqrt{\frac{2}{\pi}} \sin\left( \frac{(n+1)\pi(v + v_c)}{2v_c} \right).
\end{equation}
This choice facilitates explicit evaluation of the interaction coefficients \( \mathcal{T}_{nkm} \) by removing the dependence on \( v_c \) from the integrals. While this basis is not orthonormal in \( L^2([-v_c, v_c]) \) for general \( v_c \), it is sufficient for the present asymptotic estimates. The canonical orthonormal normalization is restored in Section~II. 
\end{remark}

\subsection{Corollary: Rigidity under Full Invariant Coincidence}

The combination of spectral convergence and the rigidity criterion of Theorem~\ref{thm:spectral-classification} yields the following corollary: any manifold whose deformation invariant package coincides with that of the spherical profile belongs to the corresponding spherical rigidity class within the present framework. What is excluded is not bare Laplace isospectrality, but full coincidence of the deformation spectral invariant package together with compatibility of the asymptotic profile.

\subsubsection*{Spectral Rigidity versus Sunada Isospectrality}

The classical Sunada construction shows that Laplace isospectrality alone does not imply isometry. This does not directly contradict the present framework, because the deformation spectrum used here is not just the bare Laplace spectrum: it is an encoded spectral package together with its asymptotic invariant structure. Thus Sunada-type examples should be understood as a caution against overclaiming from bare isospectrality, not as direct counterexamples to the rigidity criterion formulated in this paper.

The round spectral profile corresponds to saturation of the deformation invariant package studied above. Any geometric deviation compatible with the framework must therefore appear through a discrepancy in at least one invariant. What is ruled out here is not bare Laplace isospectrality in the sense of Sunada, but manifold encodings that share the same deformation spectral invariant package as the spherical profile.

Moreover, the exponential convergence \( C_n(\tau) \to \pi \) identifies the spherical asymptotic profile in mode space. Geometric uniqueness, however, is derived only after imposing coincidence of the full deformation invariant package. Within that rigidity class, no residual ambiguity remains at the level of the present framework.

\begin{corollary}[Uniqueness of the Spherical Rigidity Class]
Let \( M^d \) be a closed, simply-connected smooth manifold whose deformation spectral invariant package coincides with that of the spherical asymptotic profile. Then \( M^d \) belongs to the corresponding spherical rigidity class, with the dimension-dependent conclusions stated in Theorem~\ref{thm:global-rigidity}.
\end{corollary}

\begin{proof}
The stated invariant coincidence is precisely the hypothesis of Theorem~\ref{thm:global-rigidity}. The conclusion therefore follows directly from that theorem, with no stronger claim intended.
\end{proof}

\section{Conclusion}

We have developed a deformation-spectral framework in which the spectral flow \( C_n(\tau) \) is the central dynamical object. The variational flow, governed by nonlinear cubic interactions, is globally well-posed in the finite-energy deviation space \( \ell^2 \) and exhibits exponential convergence in the linear regime, with nonlinear stability established under the perturbative control framework developed in Section~\ref{sec:variational-flow}. The bulk density law of the encoded manifold spectrum,
\begin{equation*}
\rho_{\mathrm{enc}}(C) \sim (\pi - C)^{(d-2)/2} \qquad (C \to -\infty),
\end{equation*}
together with the entropy-dimension duality \( S(\tau) = \tfrac{d-1}{2}\log\tau^{-1} + O(1) \), provides a purely spectral route to dimension recovery that is independent of the intrinsic one-dimensional asymptotics of the fixed operator \( \hat{C} \).

To connect this mode dynamics with manifold geometry, we introduced a two-level deformation-spectrum encoding: a raw spectrum \( \{C_n(M,g)\} \) carrying the full Weyl and global spectral data, and a renormalized representative \( \widetilde{C}^{(q)}(M,g;M_*) \in \pi + \ell^2 \) admissible for the finite-energy flow. Within this framework, the spectral rigidity criterion of Theorem~\ref{thm:spectral-classification} shows that coincidence of the full deformation invariant package \( \{a_j,\eta(0),\mathcal{T}\} \) with that of \( S^d \) forces the underlying manifold into the spherical diffeomorphism class for \( d = 3 \) and \( d \geq 5 \), and into the topological class \( S^4 \) for \( d = 4 \), with a spectral obstruction against exotic smooth structures that alter the invariant package.

Several directions remain open. The encoding constructed here depends on an eigenvalue-by-eigenvalue comparison relative to a fixed reference manifold; extending the framework to a coordinate-free intrinsic comparison, or to continuous families of manifolds, would sharpen the geometric content. The dimension recovery law via spectral density is established asymptotically; a quantitative finite-mode estimate would make it computationally accessible. Finally, the connection between the spectral flow attractor and the geometry of moduli spaces of Riemannian metrics in fixed dimension \( d \) is not yet understood, and may offer a route toward a genuinely tensorial interpretation of the present effective model.

\clearpage

\end{document}